\def\ZZ{{\mathbb Z}}
\def\RR{{\mathbb R}}
\def\Sphere{{\mathbb S}}
\def\vareps{{\varepsilon}}
\newtheorem{theorem} {\sc  Theorem\rm} [section]
\newtheorem{corollary} [theorem] {\sc  Corollary\rm}
\newtheorem{lemma} [theorem] {\sc  Lemma\rm}
\newtheorem{proposition} [theorem] {\sc  Proposition\rm}
\newtheorem{definition}[theorem]{\sc  Definition\rm}
\newcounter{marnote}
\DeclareFontFamily{OT1}{rsfs}{}
\DeclareFontShape{OT1}{rsfs}{m}{n}{ <-7> rsfs5 <7-10> rsfs7 <10-> rsfs10}{}
\DeclareMathAlphabet{\mycal}{OT1}{rsfs}{m}{n}
\def\be{\begin{equation}}
\def\ee{\end{equation}}
\def\be{\begin{equation}}
\def\ee{\end{equation}}
\def\bea#1\eea{\begin{align}#1\end{align}}
\begin{document}
\title{Zero-homogeneous and $O(2)$-equivariant\\critical points of the Oseen-Frank energy\\ with multiple Frank constants}

\author{Luc Nguyen\thanks{Mathematical Institute and St Edmund Hall, University of Oxford, Andrew Wiles Building, Radcliffe Observatory Quarter, Woodstock Road, Oxford OX2 6GG, United Kingdom. Email: luc.nguyen@maths.ox.ac.uk.}}

\date{}

\maketitle
\begin{abstract}
We give an existence and classification result for zero-homogeneous and $O(2)$-equivariant critical points of the Oseen-Frank energy with multiple Frank constants. These critical points carry a topological defect of degree one and are minimizing with respect to $O(2)$-equivariant perturbations supported away from the axis of symmetry.

\noindent {\it Keywords: Oseen-Frank energy, critical points, point defects, $O(2)$-equivariant, axial symmetry.}

\noindent {\it MSC: 35J57, 35Q92, 58E15, 76A15.}

\medskip
\centerline{Dedicated to Professor Dang Duc Trong with admiration.}
\end{abstract}

\tableofcontents

\section{Introduction}

In this note, we construct a family of zero-homogeneous and $O(2)$-equivariant equilibrium configurations in the Oseen-Frank theory of nematic liquid crystals with multiple Frank constants.

Recall that, in the Oseen-Frank theory of nematic liquid crystals, equilibrium configurations are critical points of the Oseen-Frank energy
\begin{equation}
E[u;\Omega] = \int_\Omega W(u,\nabla u)\,dx
	\label{Eq:OFEnergy}
\end{equation}
where $\Omega \subset \RR^3$ is a bounded domain, $u \in H^1(\Omega, \Sphere^2)$ and the free energy density $W$ takes the form
\begin{align}
2W(u,\nabla u) 
	&= k_1 (\nabla \cdot u)^2 + k_2( u \cdot (\nabla \times u))^2 + k_3 |u \times (\nabla \times u)|^2\nonumber\\
		&\qquad + (k_2+k_4)\big[\mathrm{tr}((\nabla u)^2) - (\nabla \cdot u)^2\big].
		\label{Eq:Wdef}
\end{align}
Here $k_1 > 0, k_2 > 0, k_3 > 0$ and $k_4 \in \RR$ are the Frank (elastic) constants. We refer the reader to the survey \cite{BallSurvey} for an account of mathematical results on the Oseen-Frank theory as well as other theories of liquid crystals.

As is well known, the term $\mathrm{tr}((\nabla u)^2) - (\nabla \cdot u)^2$ in the expression of $W$ is a null Lagrangian. Therefore, whenever a Dirichlet boundary $u|_{\partial\Omega} = u_0$ is given, its contribution to $E$ is a fixed constant and can be discarded in the study of critical points of $E$. In other words, one may choose $k_4$ to be any constant as one wishes. In particular, if we write $\alpha = \min\{k_1,k_2,k_3\}$ and $\beta = 3k_1 + 2k_2 + 2k_3$, then with the choice $k_4 = \alpha - k_2$, it holds that
\[
\frac{\alpha}{2}  |\nabla u|^2 \leq W(u,\nabla u) \leq \frac{\beta}{2} |\nabla u|^2,
\]
that is, $W$ is coercive (see e.g. \cite[equation (2.9)]{AlGhi97-AnnIHP}). A standard application of the direct method of the calculus of variations shows that, for any given $u_0 \in H^1(\Omega,\Sphere^2)$, there exists a minimizer of $E$ in $H^1_{u_0}(\Omega,\Sphere^2) = \{u \in H^1(\Omega,\Sphere^2): u = u_0 \text{ on } \partial\Omega\}$.

A map $u \in H^1(\Omega,\Sphere^2)$ is a critical point for $E$ if it satisfies
\begin{multline}
-k_1 \nabla (\nabla \cdot u) + k_3 \nabla \times (\nabla \times u)\\
	 + (k_2 - k_3) \Big[ \nabla \times((u \cdot (\nabla \times u)) u) + (u \cdot (\nabla \times u))\nabla \times u\Big] = \lambda u
	 \label{Eq:ELFull}
\end{multline}
in $\Omega$, where $\lambda$ is a Lagrange multiplier accounting for the unimodular constraint.

The energy density $W$ is rotationally equivariant, that is, 
\[
W(Ru, R M R^T) = W(u,M) \text{ for all } u \in \RR^3, M \in \RR^{3 \times 3}, R \in O(3).
\]
This can be used to construct equivariant critical points of $E$. For example, if $G$ is a subgroup of $O(3)$ and the domain $\Omega$ and the boundary data $u_0$ are $G$-equivariant in the sense that
\[
\begin{cases}
 Rx \in \Omega \quad \forall~x \in \Omega,\\
 u_0(Rx) = R u_0(x)
\quad \forall~x \in \partial\Omega, R \in G,
\end{cases}
\]
then $E$ has a $G$-equivariant critical point $u$:
\[
\begin{cases}
u \text{ is a critical point of } E[\cdot,\Omega],\\
u|_{\partial\Omega} = u_0,\\
u(Rx) = Ru(x) \quad\forall~x \in  \Omega, R \in G.
\end{cases}
\]
For example, such critical point can be obtained by minimizing $E$ among $G$-equivariant maps.

A special case which has been studied extensively is the case of $O(3)$-equivariant (or more casually, rotationally symmetric) critical points. It is not hard to show that a map $u \in H^1(\Omega,\mathbb{S}^2)$  on an $O(3)$-equivariant domain $\Omega$ is $O(3)$-equivariant if and only if  
\[
u(x) = \pm n(x) := \pm \frac{x}{|x|} \quad \forall~x \in \Omega.
\]
The map $n(x) = \frac{x}{|x|}$ is frequently referred to as the hedgehog in the literature. It is a fact that the hedgehog $n$ is a critical point for $E$ for every choice of Frank constants. 

Beside being $O(3)$-equivariant, the hedgehog is also zero-homogeneous and thus can appear as a tangent map when one performs a blow-up analysis near a singular point of critical points. To motivate our later discussion, let us briefly recall some known results in the best established case where $k_1 = k_2 = k_3$ (that is, the `one-constant' case). In this case, with $k_4 = k_1 - k_2$, the energy $E$ is the Dirichlet energy, and minimizers of $E$ are minimizing harmonic maps. By a result of Schoen and Uhlenbeck \cite{su}, away from $\partial\Omega$, minimizing harmonic maps are regular away from a finite set of `point defects' and the tangent map near a defect is zero-homogeneous. Moreover, Brezis, Coron and Lieb showed in \cite{BrezisCoronLieb} that, modulo a multiplication by a constant orthogonal matrix, the tangent map of a minimizing harmonic map near a point defect is given by the hedgehog. In particular, they showed that the hedgehog $n$ is the unique minimizer for $E$ with respect to its own boundary data. A different proof of this statement was given by Lin \cite{Lin-CR87}. We however note that the hedgehog is not the unique zero-homogeneous harmonic map. In spherical coordinates 
\[
x = (r\sin\theta \cos\varphi, r \sin\theta \sin\varphi, r\cos\theta),
\]
a special family of zero-homogeneous harmonic maps is given by (see e.g. Poon \cite[p. 167]{Poon-JDG91}\footnote{The expression in \cite{Poon-JDG91} can be put in the stated form with $t = \arccos \tanh c$.})
\begin{equation}
n_t(r,\theta,\varphi) = \Big(\frac{\sin t \sin \theta}{1 + \cos t \cos\theta} \cos\varphi , \frac{\sin t \sin \theta}{1 + \cos t \cos\theta}  \sin\varphi, \frac{\cos t + \cos \theta}{1 + \cos t \cos\theta} \Big)^T,
	\label{Eq:HMnt}
\end{equation}
where $t \in (0,\pi)$ is a parameter. It is readily seen that the hedgehog $n = n_{\pi/2}$ is a member of this family. However, as mentioned above, these maps are not minimizing harmonic maps except for the hedgehog $n = n_{\pi/2}$. 

The regularity of minimizers of the Oseen-Frank energy when the Frank constants are different is much less understood. The main result in this direction is due to Hardt, Kinderlehrer and Lin \cite{Hardt-Kinder-Lin}: every minimizer is regular away from a set of zero one-dimensional Hausdorff measure. See also Hong \cite{Hong04}. There is a lack of understanding of defect structure in general, including the simplest case of a point defect. Available results point toward the minimality or stability of the hedgehog:
\begin{enumerate}[(i)]
\item When $k_2 \geq k_1$, H\'elein observed in \cite{Helein-CR87} that F.-H. Lin's proof in \cite{Lin-CR87} could be use to show that $n$ is the unique minimizer of $E$ subjected to its own boundary data (see also Ou \cite{Ou-JGA92}). 
\item When $k_1$ is sufficiently large, S.-Y. Lin showed in \cite{LinSY-Thesis} that $n$ is no longer minimizing for $E$.
\item When $8(k_2 - k_1) + k_3 < 0$, H\'elein showed in \cite{Helein-CR87} that the second variation of $E$ at $n$ is negative along certain direction and so $n$ is not minimizing for $E$. 
\item When $8(k_2 - k_1) + k_3 \geq 0$, Cohen and Taylor \cite{CohenTaylor-CPDE90} showed that $n$ is a local minimum for $E$ along any curve passing through $n$ in $H^1_n(\Omega,\Sphere^2) \cap (W^{1,p} \cap L^q)(\Omega,\RR^3)$ which is $C^2$ with respect to $(W^{1,p} \cap L^q)(\Omega,\RR^3)$-topology provided $p \in (2,3)$, $q \in (2,\infty)$ and $\frac{1}{p} + \frac{1}{q} \leq \frac{1}{2}$ (compare Kinderlehrer and Ou \cite{KinderOu-PRSLA92}). 
\end{enumerate}
As a matter of fact, experimental data for many popular nematic liquid crystals such as PAA (\cite[p. 104]{dg}, \cite[Table I]{DeJeuCS-76}), MBBA (\cite[p. 105]{dg}, \cite[Table II]{DeJeuCS-76}), 7CB, 7PCH, 5C (\cite[Table II]{SchadOsman-81}) indicate that $8(k_2 - k_1) + k_3 < 0$ in most instances. In such cases, the hedgehog is not minimizing and there is no known tangent map for a minimizer near a point defect (if a point defect does exist, say).

The goal of this note is related to the above gap of understanding. We construct a family of zero-homogeneous critical points of $E$ which generalizes the family \eqref{Eq:HMnt}. We hope that some of these new critical points would be a possible tangent map for a minimizer near a point defect.

The starting point is that the family \eqref{Eq:HMnt} is not only zero-homogeneous but also $O(2)$-equivariant in the sense that
\[
n_t(Rx) = Rn_t(x) \text{ for all } R \in \{A \in O(3): Ae_3 = e_3\} \cong O(2),
\]
where $e_3 = (0, 0, 1)^T$. We are thus led to consider $O(2)$-equivariant maps, which, in spherical coordinates, take the form (see Lemma \ref{Lem:O2Lift})
\begin{align}
u(r,\theta,\varphi) 
	&= \sin\psi(r,\theta) e_\varphi^\perp + \cos \psi(r,\theta) e_3, \quad \text{ for } r < 0 \leq 1 \text{ and } 0 \leq \theta \leq \pi,
	\label{Eq:Ansatz1}
\end{align}
where
\begin{equation}
e_\varphi = (-\sin\varphi, \cos\varphi, 0)^T \quad \text{ and } \quad 		e_\varphi^\perp = (\cos\varphi, \sin\varphi, 0)^T.
	\label{Eq:epeppDef}
\end{equation}
We note that $O(2)$-equivariance is stronger than $SO(2)$-equivariance and the latter can be more casually referred to as axial symmetry. We will not be concerned with $SO(2)$-equivariant maps in this note.

We establish:

\begin{theorem}\label{Thm:Main}
Let $k_1, k_2, k_3 > 0$. We have the following conclusions.
\begin{enumerate}[(i)]
\item For every $t \in (0,\pi)$, there exists a unique solution $\psi_t \in C^\infty((0,\pi)) \cap C^{2,1}([0,\pi])$ of the problem
\begin{equation}
\begin{cases}
\displaystyle \psi_t' = \frac{k_1^{1/2} \sin\psi_t}{\sin \theta \big( k_1 \cos^2(\psi_t - \theta) + k_3 \sin^2(\psi_t - \theta)\big)^{1/2}} &\text{ in } (0,\pi),\\
\psi_t(\pi/2) = t.
\end{cases}
	\label{Eq:psitdef}
\end{equation}
Moreover, $\psi_t$ is increasing, $\psi_t(0) = 0, \psi_t(\pi) = \pi$ and $\frac{\sin\psi_t}{\sin\theta} \in C^{1,1}([0,\pi])$.

\item For every $t \in (0,\pi)$, the map $u_t$ defined by 
\[
u_t(r,\theta,\varphi) = (\sin \psi_t(\theta) \cos \varphi, \sin \psi_t(\theta) \sin \varphi, \cos \psi_t(\theta))^T
\]
belongs to $H^1_{\rm loc}(\RR^3) \cap C^1(\RR^3 \setminus \{0\})$, has a singularity around the origin of topological degree one and is a zero-homogeneous and $O(2)$-equivariant critical point of the Oseen-Frank energy $E$. Moreover, every zero-homogeneous and $O(2)$-equivariant critical point of $E$ in $H^1_{\rm loc}(\RR^3) \cap C^1(\RR^3 \setminus \{0\})$ which has a singularity around the origin of topological degree one must coincide with some such $u_t$.

\item When $t = \pi/2$, $\psi_{\pi/2}(\theta) = \theta$ and $u_{\pi/2} = n$. When $k_1 = k_3$, $u_t = n_t$ for all $t \in (0,\pi)$.

\item If $\Omega$ is a ball centered at the origin, then, for every $t \in (0,\pi)$ and every $O(2)$-equivariant maps $u \in C^1(\Omega \setminus \{0\},\mathbb{S}^2)$ which coincides with $u_t$ in a neighborhood of $\partial \Omega \cup \{0\}$, it holds that
\[
E[u;\Omega] \geq E[u_t;\Omega].
\]
Equality holds if and only if $u = u_t$.
(See Proposition \ref{Prop:Min} for a stronger statement.)

\end{enumerate}
\end{theorem}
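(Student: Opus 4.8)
\medskip
\noindent\textbf{Strategy.}
Everything rests on reducing to the equivariant ansatz \eqref{Eq:Ansatz1} of Lemma~\ref{Lem:O2Lift}. The first point is that for a map $u=\sin\psi\,e_\varphi^\perp+\cos\psi\,e_3$ the twist term $u\cdot(\nabla\times u)$ vanishes identically: such a $u$ lies in the meridian half--plane, and a direct computation in cylindrical coordinates gives $\nabla\times u$ parallel to $e_\varphi$. Hence only $k_1$ (splay) and $k_3$ (bend) survive. For a \emph{zero--homogeneous} such $u$ one computes
\[
\nabla\cdot u=\frac1r\Big(\psi'\cos(\psi-\theta)+\frac{\sin\psi}{\sin\theta}\Big),\qquad
|u\times(\nabla\times u)|=\frac{|\psi'|}{r}\,|\sin(\psi-\theta)|,
\]
and, $k_4$ being free, the (permissible) choice $k_4=k_1-k_2$ cancels the cross term produced by the null Lagrangian $\mathrm{tr}((\nabla u)^2)-(\nabla\cdot u)^2$. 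Thus, modulo a null--Lagrangian contribution depending only on $u$ near $\partial\Omega$ and near $0$, one has $E[u;B_R]=\pi R\,\mathcal F[\psi]$ where, writing $A(\psi,\theta):=k_1\cos^2(\psi-\theta)+k_3\sin^2(\psi-\theta)$,
\[
\mathcal F[\psi]:=\int_0^\pi\Big[A(\psi,\theta)\,(\psi')^2+k_1\,\frac{\sin^2\psi}{\sin^2\theta}\Big]\sin\theta\,d\theta.
\]
With $\nu(\psi,\theta):=k_1^{1/2}\sin\psi\,\big(\sin\theta\,A(\psi,\theta)^{1/2}\big)^{-1}$ --- the right--hand side of \eqref{Eq:psitdef} --- one has the pointwise identity $A(\psi')^2+k_1\tfrac{\sin^2\psi}{\sin^2\theta}=A(\psi'-\nu)^2+2A\nu\,\psi'$, the BPS--type structure underlying the theorem: its zeros are exactly the solutions of \eqref{Eq:psitdef}. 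This reduction turns (i) into an ODE problem and the ``critical point'' parts of (ii)--(iii) into statements about $\mathcal F$, promoted to $E$ by the principle of symmetric criticality (the $O(2)$--action on $H^1(\Omega,\bS^2)$ is isometric) or, equivalently, by direct substitution into \eqref{Eq:ELFull}.

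\medskip
\noindent\textbf{Part (i).}
On $\{0<\theta<\pi,\ 0<\psi<\pi\}$ the right--hand side of \eqref{Eq:psitdef} is smooth and strictly positive, so local existence, uniqueness and $C^\infty$--smoothness are standard and $\psi_t$ is strictly increasing. Since $\psi\equiv0$ and $\psi\equiv\pi$ solve the ODE, uniqueness forces $0<\psi_t<\pi$; hence $\psi_t$ extends to all of $(0,\pi)$, and, being monotone and bounded, has endpoint limits, which the singular factor $1/\sin\theta$ forces to be $0$ at $\theta=0$ and $\pi$ at $\theta=\pi$ (otherwise $\psi_t'\sim c/\theta$ near $0$ with $c>0$, which is non--integrable). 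The reflection symmetry $\psi_{\pi-t}(\theta)=\pi-\psi_t(\pi-\theta)$ of \eqref{Eq:psitdef} reduces the analysis at $\theta=\pi$ to that at $\theta=0$; there $\psi_t-\theta\to0$, so $A\to k_1$ and $\psi_t'=\tfrac{\sin\psi_t}{\sin\theta}(1+O(\theta^2))$, and bootstrapping gives $\psi_t=c_t\,\theta+O(\theta^3)$ with $c_t>0$, whence $\psi_t\in C^{2,1}([0,\pi])$ and $\sin\psi_t/\sin\theta\in C^{1,1}([0,\pi])$ --- exactly the regularity making the Cartesian form of $u_t$ of class $C^1$ across the axis.

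\medskip
\noindent\textbf{Parts (ii)--(iii).}
With $\psi_t$ in hand, $u_t$ is manifestly zero--homogeneous and $O(2)$--equivariant; the regularity from (i) gives $u_t\in H^1_{\rm loc}(\RR^3)\cap C^1(\RR^3\setminus\{0\})$, and since $\psi_t$ increases from $0$ to $\pi$ the restriction of $u_t$ to a small sphere about $0$ has degree $\tfrac1{4\pi}\int_0^{2\pi}\!\int_0^\pi\sin\psi_t\,\psi_t'\,d\theta\,d\varphi=1$. That $u_t$ is a critical point is the first--order reduction of the Euler--Lagrange equation of $\mathcal F$ (equivalently, of \eqref{Eq:ELFull}): one verifies that solutions of \eqref{Eq:psitdef} satisfy the second--order equation $\frac{d}{d\theta}(2A\sin\theta\,\psi')=\big(\partial_\psi A\,(\psi')^2+k_1\partial_\psi\tfrac{\sin^2\psi}{\sin^2\theta}\big)\sin\theta$. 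For the converse in (ii): a zero--homogeneous $O(2)$--equivariant critical point in $H^1_{\rm loc}\cap C^1(\RR^3\setminus\{0\})$ with a degree--one defect has the form \eqref{Eq:Ansatz1} with $\psi=\psi(\theta)$, and $C^1$--regularity across the axis together with the degree forces $\psi(0)=0$, $\psi(\pi)=\pi$; then $\psi$ solves the reduced second--order equation, and at the singular endpoint $\theta=0$ the regular solutions of this equation form a one--parameter family indexed by the leading coefficient $\psi'(0)>0$, each extending uniquely --- and since the $\psi_t$ already realise every value of $\psi'(0)$, it follows that $\psi=\psi_t$ for $t=\psi(\pi/2)$, in particular that $\psi$ satisfies the first--order ODE \eqref{Eq:psitdef}. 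Finally (iii) is a direct check: $\psi\equiv\theta$ solves \eqref{Eq:psitdef} (its right--hand side is $\equiv1$) with value $\pi/2$ at $\pi/2$, so $\psi_{\pi/2}(\theta)=\theta$ and $u_{\pi/2}=n$; and when $k_1=k_3$ the ODE becomes $\psi'=\sin\psi/\sin\theta$, solved by $\tan(\psi/2)=\tan(t/2)\tan(\theta/2)$, which the half--angle identities show equals the profile of $n_t$ in \eqref{Eq:HMnt}.

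\medskip
\noindent\textbf{Part (iv), Proposition~\ref{Prop:Min}, and the main obstacle.}
Let $u$ be an admissible competitor. As $u$ and $u_t$ are $O(2)$--equivariant their twist terms both vanish, and as they agree near $\partial\Omega\cup\{0\}$ the null--Lagrangian part of $E$ contributes equally; hence $E[u;\Omega]-E[u_t;\Omega]=\widetilde E[u]-\widetilde E[u_t]$, where $\widetilde E[w]=\tfrac12\int_\Omega\big(k_1(\nabla\cdot w)^2+k_3|w\times(\nabla\times w)|^2\big)\,dx$. The plan is to bound $\widetilde E[u]$ from below by a \emph{calibration}: a null Lagrangian $\mathcal N[w]$ --- of the type $\int_\Omega d\eta$ for a $1$--form $\eta=\eta(x,w)$ depending only on $w$, built (as in the harmonic--map case) from the $w$--pullback of a closed $2$--form on the target $\bS^2$, but carrying an anisotropic weight tailored to $k_1\neq k_3$ so that along the ansatz it reduces to the BPS term $2A\nu\,\psi'$ --- satisfying $\widetilde E[w]\ge\mathcal N[w]$ pointwise with equality precisely when $w$ is of the form \eqref{Eq:Ansatz1} with profile solving \eqref{Eq:psitdef}, and with $\mathcal N[u]=\mathcal N[u_t]$ since $u,u_t$ share boundary data. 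Then $\widetilde E[u]\ge\mathcal N[u]=\mathcal N[u_t]=\widetilde E[u_t]$, with equality iff $u=u_t$; the caveat (sharpened in Proposition~\ref{Prop:Min}) that perturbations be supported away from the axis is exactly what is needed, since the calibrating form is singular on $\{\sin\theta=0\}$. The ODE analysis in (i) and the verifications in (iii) are routine; the two genuinely non--routine points are the rigidity step in (ii) and, above all, the construction of this calibration --- i.e. finding the anisotropic weight that makes the pointwise inequality $\widetilde E\ge\mathcal N$ hold \emph{and} be tight along $u_t$, and controlling it near the axis --- which I expect to be the crux.
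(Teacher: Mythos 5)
Your overall strategy --- lifting to a profile $\psi$, the first-order (BPS) equation \eqref{Eq:psitdef}, and a calibration/completion-of-squares for minimality --- is the paper's strategy, and your treatments of (i) and (iii) match the actual proof in substance. But the two steps you yourself defer as ``the crux'' are genuinely missing, and in both cases the specific device the paper uses is absent from your outline.

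For the classification in (ii) you propose a shooting argument: regular solutions of the second-order reduced equation at the singular endpoint $\theta=0$ form a one-parameter family indexed by $\psi'(0)$, and the $\psi_t$ realise every such value. Neither claim is justified: uniqueness for a second-order ODE with a singular point at $\theta=0$ given only $\psi(0)=0$ and $\psi'(0)$ is not Picard--Lindel\"of; you have no a priori reason that an arbitrary finite-energy critical profile has a derivative at $\theta=0$ at all; and surjectivity of $t\mapsto\psi_t'(0)$ onto $(0,\infty)$ would itself require a continuity and limiting analysis you do not perform. The paper avoids all of this by observing that the second-order equation \eqref{Eq:ELy} admits the first integral \eqref{Eq:1stInt}, i.e. $\sin^2\theta\,\big(k_1\cos^2(\psi-\theta)+k_3\sin^2(\psi-\theta)\big)(\psi')^2-k_1\sin^2\psi=C$, and that membership in $X(D)$ (integrability of $\sin^2\psi/\sin\theta$ and of $(\psi')^2\sin\theta$) forces $C=0$; the first-order equation and the rigidity then follow at once. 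This first integral, with the vanishing of the constant forced by finiteness of energy, is the idea your argument needs and does not supply.

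For (iv), your pointwise identity $A(\psi')^2+k_1\sin^2\psi/\sin^2\theta=A(\psi'-\nu)^2+2A\nu\psi'$ is written only for the one-dimensional functional of zero-homogeneous profiles, whereas the competitors in (iv) are not zero-homogeneous: the reduced energy $J[\psi]$ of Lemma \ref{Lem:RE} contains $(\partial_r\psi)^2$, $(\partial_\theta\psi)^2$, a cross term $\partial_r\psi\,\partial_\theta\psi$ with coefficient $k_1-k_3$, and a term linear in the gradient coming from $\tfrac{1}{r\sin\theta}\sin\psi$ in $\nabla\cdot u$. The actual proof completes the square first in $\partial_r\psi$ (absorbing the cross term) and then in $\partial_\theta\psi$ around the BPS value, and must then verify that the leftover terms, linear in $(\partial_r\psi,\partial_\theta\psi)$, integrate to quantities depending only on $\psi|_{r=1}$; this is Lemma \ref{Lem:ABCD}, which constructs the exact differential $\partial_r(rC)+\partial_\theta D$ explicitly and picks up a contribution from the axis (the $\int_0^{j\pi}B(s,\pi)\,ds$ term). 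You correctly anticipate that an anisotropic calibration exists, but you do not construct it, and without it neither the inequality $E[u]\ge E[u_t]$ nor the equality case is established. Incidentally, the resulting calibration is not obstructed at the axis in the way you fear: Proposition \ref{Prop:Min} holds for every $\psi\in X_1(D)$ with $\psi|_{r=1}=\psi_t$, not only for perturbations supported away from $\{\sin\theta=0\}$.
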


It should be clear that the maps $u_t$ do not depend on the value of $k_2$.

We remark that there is no zero-homogeneous $O(2)$-equivariant critical point in $H^1_{\rm loc}(\RR^3) \cap C^1(\RR^3 \setminus \{0\})$ of the Oseen-Frank energy $E$ of the form \eqref{Eq:Ansatz1} such that $\psi(0) = 0$ and $\psi(\pi) = j\pi$ with $j \notin \{0,\pm 1\}$. Imposing $\psi(\pi) = 0$ leads to the constant map $u \equiv e_3$ and imposing $\psi(\pi) = -\pi$ leads to $\psi = -\psi_t$ for some $t \in (0,\pi)$. See Corollary \ref{Cor:ODE2}.

It remains an interesting open problem if any of the map $u_t$ is minimizing for $E$ with respect to its own boundary data in the class of non-equivariant maps. If this is the case, those $u_t$ can arise as a tangent map near a point defect of a minimizer.

\subsubsection*{Rights retention statement.} For the purpose of Open Access, the author has applied a CC BY public copyright licence to any Author Accepted Manuscript (AAM) version arising from this submission.

\subsubsection*{Acknowledgment.} The author would like to thank Prof. Epifanio Virga for useful comments.

\section{Proof}

We assume that $\Omega$ is the unit ball and use spherical coordinates 
\[
x = (r\sin\theta \cos\varphi, r \sin\theta \sin\varphi, r\cos\theta).
\]
Let
\begin{align*}
D &= \{(r,\theta): 0 < r < 1, 0 < \theta < \pi\}.
\end{align*}
We assume throughout this section that $k_4 = - k_2$.

\subsection{$O(2)$-equivariant maps}

\begin{definition}
A map $u \in L^1(\Omega,\RR^3)$ is said to be $O(2)$-equivariant if for every matrix $R \in O(3)$ satisfying $Re_3 = e_3$ it holds that
\[
u(Rx) = R u(x) \text{ a.e. in } \Omega.
\]
\end{definition}

If we let 
\begin{align*}
R_\eta &= \begin{pmatrix}
	\cos \eta & -\sin \eta & 0\\
	\sin\eta & \cos \eta & 0\\
	0 &0 & 1
\end{pmatrix},\\
J 
	&= \begin{pmatrix}
	-1 & 0 & 0\\
	0 & 1 & 0\\
	0 &0 & 1
\end{pmatrix},
\end{align*}
then it is easy to see that $u \in L^1(\Omega,\RR^3)$ is $O(2)$-equivariant if and only if it satisfies simultaneously
\begin{equation}
u(R_\eta x) = R_\eta u(x) \text{ for almost every } x \in \Omega \text{ and for all } \eta \in [0,2\pi)
	\label{Eq:O2a}
\end{equation}
and
\begin{equation}
u(Jx) = Ju(x) \text{ for almost every } x \in \Omega.
\label{Eq:O2b}
\end{equation}

\begin{lemma}\label{Lem:O2Lift}
Suppose that $u \in H^1(\Omega,\mathbb{S}^2)$ is $O(2)$-equivariant. Then there exists $\psi \in H^1_{\rm loc}(D)$ such that \eqref{Eq:Ansatz1} holds a.e. in $\Omega$ and
\[
\int_D \Big[(\partial_r \psi)^2   + \frac{1}{r^2} (\partial_\theta \psi)^2 + \frac{1}{r^2 \sin^2 \theta}\sin^2\psi\Big] r^2 \sin\theta \,d\theta\,dr = \frac{1}{2\pi}\int_\Omega |\nabla u|^2\,dx < \infty.
\]
\end{lemma}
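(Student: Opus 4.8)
The plan is to strip off the two symmetries \eqref{Eq:O2a}--\eqref{Eq:O2b} in turn. Let $\varphi = \varphi(x)$ denote the azimuthal angle of $x \in \Omega$ (well defined off the $x_3$-axis, which is a null set) and put $v := R_{-\varphi}\,u$. Since $R_\eta x$ has azimuthal angle $\varphi + \eta$, the rotational equivariance \eqref{Eq:O2a} gives $v(R_\eta x) = R_{-\varphi-\eta}\,u(R_\eta x) = R_{-\varphi - \eta} R_\eta\, u(x) = v(x)$ for a.e.\ $x$ and every $\eta$, so $v$ is invariant under the $SO(2)$-action and is therefore (a.e.) the pullback of a measurable map $V \colon D \to \mathbb{S}^2$; equivalently $u(r,\theta,\varphi) = R_\varphi\, V(r,\theta)$ a.e.\ in $\Omega$. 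Inserting this into the reflection equivariance \eqref{Eq:O2b}, using that $Jx$ has azimuthal angle $\pi - \varphi$ and the identity $J R_\varphi = R_{-\varphi} J$ (valid since $J$ is a reflection), one is led to $R_\pi V = J V$, i.e.\ $(-V_1,-V_2,V_3) = (-V_1,V_2,V_3)$, which forces $V_2 \equiv 0$. Hence $V = (V_1,0,V_3)$ with $V_1^2 + V_3^2 = 1$ and, by \eqref{Eq:epeppDef}, $u = V_1\, e_\varphi^\perp + V_3\, e_3$; so once $\psi$ with $(V_1,V_3) = (\sin\psi,\cos\psi)$ is produced, \eqref{Eq:Ansatz1} holds.

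The key step is to upgrade this to the asserted Sobolev regularity. Since $(r,\theta,\varphi)$ is a smooth change of variables away from the axis, $u \in H^1(\Omega)$ and Fubini (applied to $u$ and its weak derivatives) show that for a.e.\ $\varphi_0$ the restriction of $u$ to the half-plane $\{\varphi = \varphi_0\}$ lies in $H^1_{\rm loc}(D)$; hence $V = R_{-\varphi_0}\big(u|_{\{\varphi = \varphi_0\}}\big) \in H^1_{\rm loc}(D)$, so $(V_1,V_3) \in H^1_{\rm loc}(D,\mathbb{S}^1)$. As $D$ is a convex, hence simply connected, planar domain, a standard lifting argument then yields $\psi \in H^1_{\rm loc}(D)$ with $(V_1,V_3) = (\sin\psi,\cos\psi)$: on each relatively compact ball $B \Subset D$ one has a local $H^1$-branch $\psi_B$ of the angle of $(V_3,V_1) \in \mathbb{S}^1$; two such branches differ on (connected) overlaps by a constant in $2\pi\mathbb{Z}$, since a $W^{1,1}$ map into a discrete set is locally constant; and simple connectedness of $D$ lets one fix these constants consistently, producing a single-valued $\psi \in H^1_{\rm loc}(D)$.

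Finally, with $u = (\sin\psi\cos\varphi,\, \sin\psi\sin\varphi,\, \cos\psi)$ and $\psi = \psi(r,\theta) \in H^1_{\rm loc}(D)$, the chain and product rules (legitimate since $\sin\psi, \cos\psi \in H^1_{\rm loc}(D) \cap L^\infty$ and $\cos\varphi, \sin\varphi$ are smooth and bounded) give $|\partial_r u|^2 = (\partial_r\psi)^2$, $|\partial_\theta u|^2 = (\partial_\theta\psi)^2$ and $|\partial_\varphi u|^2 = \sin^2\psi$ a.e.\ off the axis; since the Euclidean metric in spherical coordinates is $\mathrm{diag}(1, r^2, r^2\sin^2\theta)$, we have $|\nabla u|^2 = |\partial_r u|^2 + r^{-2}|\partial_\theta u|^2 + (r^2\sin^2\theta)^{-1}|\partial_\varphi u|^2$, and integrating over $\Omega$ against $r^2\sin\theta\, dr\, d\theta\, d\varphi$ and performing the trivial $\varphi$-integration gives the stated identity; its left-hand side is finite because $u \in H^1(\Omega)$. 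The only point needing genuine care is the regularity-and-lifting step in the middle paragraph (in particular, slicing in coordinates that degenerate on the axis, which is harmless as the axis is a null set, and gluing the local lifts); the symmetry reduction in the first paragraph and the energy computation just given are routine.
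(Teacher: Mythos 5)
Your argument is correct and follows essentially the same route as the paper: both reduce $u$ to an $\mathbb{S}^1$-valued map $(V_1,V_3)=(u\cdot e_\varphi^\perp,\,u\cdot e_3)$ depending on $(r,\theta)$ alone, use the reflection $J$ to kill the $e_\varphi$-component, invoke the standard lifting theorem for $H^1_{\rm loc}(D,\mathbb{S}^1)$ on the simply connected rectangle $D$, and compute the energy in the moving frame. The only (immaterial) technical differences are that the paper establishes $\varphi$-independence via an irrational rotation applied to the frame components rather than conjugation by $R_{-\varphi}$ plus Fubini slicing, and reads off $(V_1,V_3)\in H^1_{\rm loc}(D)$ directly from the expression of $|\nabla u|^2$ in the frame rather than by restricting $u$ to a generic half-plane.
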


We will refer to the function $\psi$ in the conclusion of the lemma as the lifting of the $O(2)$-equivariant map $u$.

\begin{proof}
We adapt the proof of \cite[Proposition 2.1]{INSZ_AnnIHP}. We work in spherical coordinates. Let
\[
a = u \cdot e_\varphi^\perp, \quad b = u \cdot e_3, \quad c = u \cdot e_\varphi,
\]
where the vector fields $e_\varphi$ and $e_\varphi^\perp$ are defined by \eqref{Eq:epeppDef}. Then \eqref{Eq:O2a} implies that for every $\eta \in [0,2\pi)$, 
\[
a(r,\theta,\varphi) = a(r,\theta, \varphi + \eta), \quad b(r,\theta,\varphi) = b(r,\theta, \varphi + \eta), \quad c(r,\theta,\varphi) = c(r,\theta, \varphi + \eta)
\]
for a.e. $r \in (0,1), \theta \in (0,\pi), \varphi \in (0,2\pi)$. In particular, when $\frac{\eta}{2\pi}$ is irrational, this implies that $a, b, c$ are independent of $\varphi$:
\[
a = a(r,\theta), \quad b = b(r,\theta), \quad c = c(r,\theta).
\]
Noting that $e_\varphi \circ J = - J e_\varphi$ we deduce from \eqref{Eq:O2b} that
\[
c \equiv 0 \text{ and hence } a^2 + b^2 \equiv 1.
\]
Summarizing, we have shown that
\[
u(r,\theta,\varphi) 
	= \big(a(r,\theta)  \cos\varphi, a(r,\theta) \sin \varphi, b(r,\theta)\big)^T
\]
for $r < 0 \leq 1, 0 \leq \theta \leq \pi$ and $0 \leq \varphi < 2\pi$.

Next, we compute
\begin{align*}
|\nabla u|^2
	&= |\partial_r u|^2 + \frac{1}{r^2} |\partial_\theta u|^2 + \frac{1}{r^2 \sin^2 \theta} |\partial_\varphi u|^2\\
	&= (\partial_r a)^2 + (\partial_r b)^2 + \frac{1}{r^2} (\partial_\theta a)^2 + \frac{1}{r^2}(\partial_\theta b)^2 + \frac{1}{r^2 \sin^2 \theta}a^2.
\end{align*}
It follows that $(a,b) \in H^1_{\rm loc}(D,\mathbb{S}^1)$. By the lifting property for unimodular Sobolev maps (see \cite{BethuelZheng-JFA88, BBM-JAM00}), we deduce that there exists $\psi \in H^1_{\rm loc}(D)$ such that
\[
\begin{cases}
(\partial_r \psi, \partial_\theta \psi)
	= (-a \partial_r b + b \partial_r a, -a \partial_\theta b + b \partial_\theta a),\\
a(r,\theta) 
	= \sin \psi(r,\theta) \text{ and } b(r,\theta) = \cos \psi(r,\theta) \text{ a.e. in } D.
\end{cases}
\]
This gives the representation \eqref{Eq:Ansatz1}. Finally, we have
\begin{align*}
|\nabla u|^2
	&= (\partial_r \psi)^2   + \frac{1}{r^2} (\partial_\theta \psi)^2 + \frac{1}{r^2 \sin^2 \theta}\sin^2\psi,
\end{align*}
from which the last conclusion follows.
\end{proof}

Motivated by Lemma \ref{Lem:O2Lift}, we define
\begin{multline}
X(D) = \Big\{\psi \in H^1_{\rm loc}(D) \text{ such that }\\
	  \int_D \Big[(\partial_r \psi)^2   + \frac{1}{r^2} (\partial_\theta \psi)^2 + \frac{1}{r^2 \sin^2 \theta}\sin^2\psi\Big] r^2 \sin\theta \,d\theta\,dr < \infty \Big\}.
	  \label{Eq:XDdef}
\end{multline}
 
\begin{lemma}\label{Lem:XjAxis}
Let $\psi \in X(D)$. Then $\cos \psi \in W^{1,1}((r_0,1) \times (0,\pi))$ for any $r_0 \in (0,1)$. Moreover,
\[
\cos^2 \psi = 1 \text{ on } [0,1] \times \{0,\pi\}
\]
in the sense of trace.
\end{lemma}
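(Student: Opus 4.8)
The plan is to prove the two assertions separately; both rest on the single integrability bound defining $X(D)$ together with Cauchy--Schwarz adapted to the degenerate weight $\sin\theta$, plus — for the trace — the absolute-continuity-on-lines description of Sobolev functions. Fix $r_0\in(0,1)$ and write $Q=(r_0,1)\times(0,\pi)$. Since $\cos$ is Lipschitz and $\psi\in H^1_{\rm loc}(D)$, the chain rule gives $\cos\psi\in H^1_{\rm loc}(D)$ with $\partial_r\cos\psi=-\sin\psi\,\partial_r\psi$, $\partial_\theta\cos\psi=-\sin\psi\,\partial_\theta\psi$, and the same identities hold distributionally on $Q$ because every $\phi\in C_c^\infty(Q)$ has compact support in $D$. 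On $Q$ one has $r^2\ge r_0^2$, so the definition of $X(D)$ yields
\[
\int_Q\big[(\partial_r\psi)^2+(\partial_\theta\psi)^2\big]\sin\theta\,d\theta\,dr\le\frac1{r_0^2}\int_D\Big[(\partial_r\psi)^2+\tfrac1{r^2}(\partial_\theta\psi)^2\Big]r^2\sin\theta\,d\theta\,dr<\infty,
\]
while $\int_Q\frac{\sin^2\psi}{\sin\theta}\,d\theta\,dr\le\int_D\frac{\sin^2\psi}{\sin\theta}\,d\theta\,dr<\infty$ (this term carries no $r$-weight). Hence, splitting $|\sin\psi|\,|\partial_r\psi|=\frac{|\sin\psi|}{\sqrt{\sin\theta}}\cdot|\partial_r\psi|\sqrt{\sin\theta}$ and applying Cauchy--Schwarz,
\[
\int_Q|\sin\psi\,\partial_r\psi|\,d\theta\,dr\le\Big(\int_Q\tfrac{\sin^2\psi}{\sin\theta}\,d\theta\,dr\Big)^{1/2}\Big(\int_Q(\partial_r\psi)^2\sin\theta\,d\theta\,dr\Big)^{1/2}<\infty,
\]
and similarly with $\partial_\theta$ in place of $\partial_r$. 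As $\cos\psi\in L^\infty\subset L^1(Q)$, this proves $\cos\psi\in W^{1,1}(Q)$. The identical estimate, with weak gradient $-\sin(2\psi)\,\nabla\psi$ pointwise dominated by $2|\sin\psi|\,|\nabla\psi|$, shows $\sin^2\psi\in W^{1,1}(Q)$, hence also $\cos^2\psi=1-\sin^2\psi\in W^{1,1}(Q)$.

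For the trace statement, I would set $g=\sin^2\psi\ge0$. By the absolute-continuity-on-lines characterization of $W^{1,1}(Q)$ together with Fubini applied to the finite integrals $\int_Q|\partial_\theta g|$ and $\int_D\frac{g}{\sin\theta}$, for a.e. $r\in(r_0,1)$ the slice $\theta\mapsto g(r,\theta)$ is absolutely continuous on $(0,\pi)$ with $\partial_\theta g(r,\cdot)\in L^1(0,\pi)$ — so it extends continuously to $[0,\pi]$, and its values at $\theta=0$ and $\theta=\pi$ coincide with the traces $T_0g(r)$ and $T_\pi g(r)$ of $g$ on $[r_0,1]\times\{0\}$ and $[r_0,1]\times\{\pi\}$ — and moreover $\int_0^\pi\frac{g(r,\theta)}{\sin\theta}\,d\theta<\infty$. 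If $T_0g>0$ on a set of positive measure, then for each such $r$ one would have $g(r,\theta)\ge\frac12 T_0g(r)>0$ on some interval $(0,\theta^\ast)$, and hence $\int_0^{\theta^\ast}\frac{g(r,\theta)}{\sin\theta}\,d\theta=+\infty$, contradicting the previous line; thus $T_0g=0$ a.e., and symmetrically $T_\pi g=0$ a.e. Therefore the trace of $\cos^2\psi=1-g$ on $[r_0,1]\times\{0,\pi\}$ equals $1$, and since $r_0\in(0,1)$ is arbitrary one concludes $\cos^2\psi=1$ on $[0,1]\times\{0,\pi\}$ in the sense of trace.

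The only point that needs a little care is the identification, for a.e.\ $r$, of the endpoint values of the absolutely continuous slice $\theta\mapsto g(r,\theta)$ with the $W^{1,1}$-traces $T_0g$ and $T_\pi g$; this is standard — for instance one combines the slice description with the $L^1$-convergence $g(\cdot,\theta)\to T_0g$ as $\theta\to0^+$ and passes to a subsequence converging a.e. — so the whole argument reduces to the two elementary integral estimates above and the divergence of $\int_0^{\theta^\ast}\frac{d\theta}{\sin\theta}$.
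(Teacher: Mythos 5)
Your proof is correct. The first half (the $W^{1,1}$ bound via Cauchy--Schwarz with the splitting $|\sin\psi|\,|\partial\psi|=\frac{|\sin\psi|}{\sqrt{\sin\theta}}\cdot\sqrt{\sin\theta}\,|\partial\psi|$) is exactly the paper's argument; your extra observation that $\sin^2\psi$, and hence $\cos^2\psi$, also lies in $W^{1,1}(Q)$ is a worthwhile precision, since the trace asserted in the statement is that of $\cos^2\psi$. For the trace itself the two arguments genuinely diverge, although both ultimately exploit the same tension between the non-integrability of $\frac{1}{\sin\theta}$ at $\theta=0,\pi$ and the integrability of $\frac{\sin^2\psi}{\sin\theta}$ over $D$. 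The paper stays at the level of integrated quantities: it bounds $\int_{r_0}^1|\cos^2\psi(r,\theta)-\cos^2\psi(r,0)|\,dr$ by $2\int_{[r_0,1]\times[0,\delta]}|\sin\psi|\,|\partial_\theta\psi|$, multiplies the trace defect $1-\cos^2\psi(r,0)$ by $\frac{1}{\sin\theta}$ and integrates over the strip $\theta\in[\delta/2,\delta]$, whose $\frac{1}{\sin\theta}$-mass $\ln\frac{\tan(\delta/2)}{\tan(\delta/4)}$ stays bounded away from zero, and then lets $\delta\to0$ so that both error terms vanish by absolute continuity of the integral. Your argument is slicewise: Fubini plus the absolute-continuity-on-lines representative give, for a.e.\ $r$, a continuous extension of $\theta\mapsto\sin^2\psi(r,\theta)$ to $\theta=0$ whose value must vanish because $\int_0^{\theta^\ast}\frac{d\theta}{\sin\theta}=\infty$ while $\int_0^\pi\frac{\sin^2\psi(r,\theta)}{\sin\theta}\,d\theta<\infty$. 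This is conceptually cleaner, but it does require identifying the endpoint values of the slices with the $W^{1,1}$-trace --- the step you rightly flag; your sketch (pick $\theta_k\to0$ along which the $L^1$-convergent family $\sin^2\psi(\cdot,\theta_k)\to T_0g$ also converges a.e.) closes it. The paper's averaged version sidesteps that identification entirely, needing only the $L^1$ definition of the trace, at the price of a slightly more computational estimate.
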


\begin{proof}
We have by the definition of $X(D)$ and Cauchy-Schwarz' inequality that
\[
\int_D \Big[  |\sin\psi||\partial_r \psi|   + \frac{1}{r } |\sin\psi||\partial_\theta \psi|\Big]  r \,d\theta\,dr < \infty.
\]
This shows that $\cos \psi \in W^{1,1}((r_0,1) \times (0,\pi))$ for any $r_0 \in (0,1)$.

Fix some $r_0 \in (0,1)$.
For any $0 \leq \theta \leq \delta \leq \pi$, we have
\begin{align*}
\int_{r_0}^1 |\cos^2 \psi(r,\theta) - \cos^2 \psi(r,0)|\,dr
	&\leq 2\int_{r_0}^1 |\cos \psi(r,\theta) - \cos \psi(r,0)|\,dr\\
	&\leq 2\int_{[r_0,1] \times [0,\delta]} |\sin \psi(r,t)| |\partial_\theta \psi(r,t)|\,dt\,dr.
\end{align*}
It follows that
\begin{align*}
& \ln \frac{\tan(\delta/2)}{\tan(\delta/4)}  \int_{r_0}^1  (1 - \cos^2\psi (r,0))\,dr\\
	&\qquad=  \int_{[r_0,1] \times [\delta/2,\delta]}  \frac{1}{\sin\theta} (1 - \cos^2\psi (r,0))\,d\theta\,dr\\
	&\qquad \leq \int_{[r_0,1] \times [\delta/2,\delta]}  \frac{1}{\sin\theta} \sin^2\psi(r,\theta)\,d\theta\,dr\\
		&\qquad\qquad + \int_{[r_0,1] \times [\delta/2,\delta]}  \frac{1}{\sin\theta} |\cos^2\psi(r,\theta) - \cos^2\psi (r,0)|\,d\theta\,dr\\
	&\qquad \leq \int_{[r_0,1] \times [\delta/2,\delta]}  \frac{1}{\sin\theta} \sin^2\psi(r,\theta)\,d\theta\,dr\\
		&\qquad\qquad + 2\ln \frac{\tan(\delta/2)}{\tan(\delta/4)} \int_{[r_0,1] \times [0,\delta]} |\sin \psi(r,\theta)| |\partial_\theta \psi(r,\theta)|\,d\theta\,dr.
\end{align*}
Recalling that $\frac{1}{\sin\theta} \sin^2\psi(r,\theta)$ is intergrable over $D$, we deduce upon sending $\delta \rightarrow 0$ that
\[
\int_{r_0}^1  (1 - \cos^2\psi (r,0))\,dr = 0,
\]
from which it follows that $\cos^2\psi = 1$ on $\{\theta = 0\}$. Similarly, we have $\cos^2\psi = 1$ on $\{\theta = \pi\}$.
\end{proof}

\subsection{Some sets of $O(2)$-equivariant maps with an isolated singularity}

In the previous subsection, we showed that every $O(2)$-equivariant map $u \in H^1(\Omega,\mathbb{S}^2)$ has a lifting $\psi \in X(D)$ such that $\cos^2 \psi = 1$ along the axis of symmetry in the sense of trace. Since we are mainly concerned with possible models for a point defect, we limit ourself to the following subsets of $X(D)$: For $j \in \ZZ$, define 
\begin{multline*}
X_j(D) = \Big\{\psi \in H^1_{\rm loc}(D) \text{ such that }\\
	  \int_D \Big[(\partial_r \psi)^2   + \frac{1}{r^2} (\partial_\theta \psi)^2 + \frac{1}{r^2 \sin^2 \theta} (\psi - j\theta)^2\Big] r^2 \sin\theta \,d\theta\,dr < \infty \Big\}.
\end{multline*}
Using the inequalities
\begin{align*}
\sin^2\psi
	&\leq \psi^2 \leq 2(\psi - j\theta)^2 + 2j^2\theta^2 \text{ for } 0 \leq \theta \leq \pi/2,\\
\sin^2\psi
	&\leq (\psi - j\pi)^2 \leq 2(\psi - j\theta)^2 + 2j^2(\pi-\theta)^2 \text{ for } \pi/2 < \theta \leq \pi,	
\end{align*}
we see that
\[
X_j(D) \subset X(D) \text{ for all } j \in \ZZ.
\]
It should be clear that if $u \in H^1(\Omega, \mathbb{S}^2) \cap C^1(\Omega \setminus \{0\},\mathbb{S}^2)$ is an $O(2)$-equivariant map, then, up to an addition by a fixed multiple of $\pi$, its lifting $\psi$ belongs to $X_j(D)$ for some $j$.

The following result says that $X_0(D)$ is the completion of $C_c^\infty((0,1] \times (0,\pi))$ under the norm
\[
\|\psi\|_0^2 = \int_D \Big[(\partial_r \psi)^2   + \frac{1}{r^2} (\partial_\theta \psi)^2 + \frac{1}{r^2 \sin^2 \theta} \psi^2\Big] r^2 \sin\theta \,d\theta\,dr
\]
and $X_j(D) = j\theta + X_0(D)$.

\begin{lemma}\label{Lem:XjDen}
$X_0(D)$ is the completion of $C_c^\infty((0,1] \times (0,\pi))$ under the norm $\|\cdot\|_0$.
\end{lemma}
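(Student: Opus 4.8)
The plan is to verify two facts: that $(X_0(D),\|\cdot\|_0)$ is a Hilbert space in its own right, and that $C_c^\infty((0,1]\times(0,\pi))$ sits inside it as a dense subspace. The first fact is routine. The norm $\|\cdot\|_0$ comes from the obvious inner product on $H^1_{\rm loc}(D)$, and it is positive definite since $\|\psi\|_0=0$ forces $\psi=0$ a.e.; on any compact subset $K\subset D$ all three weights $r^2\sin\theta$, $\sin\theta$, $(\sin\theta)^{-1}$ (obtained from the three integrands after absorbing the volume factor $r^2\sin\theta$) are bounded between positive constants, so a $\|\cdot\|_0$-Cauchy sequence is Cauchy in $H^1(K)$ for every such $K$, its $H^1_{\rm loc}(D)$-limit lies in $X_0(D)$ by Fatou's lemma, and the convergence also holds in $\|\cdot\|_0$. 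That $C_c^\infty((0,1]\times(0,\pi))\subset X_0(D)$ is immediate, since such a function is supported in some $[r_0,1]\times[\theta_0,\pi-\theta_0]$ on which all weights are bounded. Hence the whole content of the lemma is the density statement.

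For density I would take $\psi\in X_0(D)$ and approximate it in $\|\cdot\|_0$ in three stages, each shrinking the support. \emph{Stage 1: cut off near $r=0$.} Pick $\chi_\delta\in C^\infty([0,1])$ with $\chi_\delta\equiv0$ on $[0,\delta]$, $\chi_\delta\equiv1$ on $[2\delta,1]$ and $|\chi_\delta'|\le C\delta^{-1}$, and set $\psi^{(\delta)}=\chi_\delta\psi$. All terms in $\|\psi^{(\delta)}-\psi\|_0$ tend to $0$ by dominated convergence except the one from $\chi_\delta'\psi$; on $\{\delta<r<2\delta\}$ one has $r^2\le4\delta^2$ and $\sin\theta\le(\sin\theta)^{-1}$, so $(\chi_\delta'\psi)^2\,r^2\sin\theta\le C\,\psi^2/\sin\theta$, and $\int_{\{\delta<r<2\delta\}}\psi^2/\sin\theta\,d\theta\,dr\to0$ by absolute continuity of the integral of $\psi^2/\sin\theta\in L^1(D,d\theta\,dr)$. \emph{Stage 2: cut off near the axis $\theta=0,\pi$.} Pick $\rho_\varepsilon\in C^\infty([0,\pi])$ with $\rho_\varepsilon\equiv0$ on $[0,\varepsilon]\cup[\pi-\varepsilon,\pi]$, $\rho_\varepsilon\equiv1$ on $[2\varepsilon,\pi-2\varepsilon]$, $|\rho_\varepsilon'|\le C\varepsilon^{-1}$, and replace $\psi$ by $\rho_\varepsilon\psi$. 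The delicate term involves $\rho_\varepsilon'\psi$, supported where $\min(\theta,\pi-\theta)\in(\varepsilon,2\varepsilon)$; there $\sin\theta\le2\varepsilon$, so $(\rho_\varepsilon'\psi)^2\sin\theta\le C\varepsilon^{-2}\psi^2\sin\theta=C\varepsilon^{-2}(\psi^2/\sin\theta)\sin^2\theta\le 4C\,\psi^2/\sin\theta$, and the integral of $\psi^2/\sin\theta$ over this shrinking region tends to $0$. This Hardy-type bound at the symmetry axis is the point where the specific weight $(\sin\theta)^{-2}$ in the definition of $X_0(D)$ enters; the estimates of Lemma~\ref{Lem:XjAxis} are of exactly this nature and can be reused.

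After Stages 1--2 we have reduced to $\psi\in X_0(D)$ supported in a box $Q=[r_0,1]\times[\theta_0,\pi-\theta_0]$ and vanishing near $\{r=r_0\}$, $\{\theta=\theta_0\}$, $\{\theta=\pi-\theta_0\}$ (but not necessarily at $r=1$). On $Q$ the weights are bounded above and below, so $\|\cdot\|_0$ is equivalent to the usual $H^1(Q)$ norm, and it remains to approximate $\psi$ in $H^1(Q)$ by elements of $C_c^\infty((0,1]\times(0,\pi))$. \emph{Stage 3: mollification.} I would extend $\psi$ by even reflection across $\{r=1\}$ to an $H^1$ function on a slightly larger box, mollify with a small parameter $\eta$, and restrict to $\{r\le1\}$. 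For $\eta$ small the result is $C^\infty$ on a neighbourhood of $Q$ (in particular on $(0,1]\times(0,\pi)$), is still supported away from $r=0$, $\theta=0$, $\theta=\pi$, hence lies in $C_c^\infty((0,1]\times(0,\pi))$, and converges to $\psi$ in $H^1(Q)$, hence in $\|\cdot\|_0$. A diagonal argument over $\delta$, $\varepsilon$, $\eta$ then yields a sequence in $C_c^\infty((0,1]\times(0,\pi))$ converging to the original $\psi$ in $\|\cdot\|_0$, which is the claim.

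I expect the only genuinely substantive point to be the Stage 2 estimate near the symmetry axis: one must check that the singular weight $(\sin\theta)^{-2}$ in $X_0(D)$ is strong enough to absorb the error produced by a (say, linear) cutoff in $\theta$ — a cruder logarithmic cutoff would also work but is unnecessary. Everything else is routine cutoff and mollification in a weighted Sobolev space, the reflection in Stage 3 being needed only because functions in $C_c^\infty((0,1]\times(0,\pi))$ are permitted to be nonzero on the boundary piece $\{r=1\}$.
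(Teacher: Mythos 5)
Your proposal is correct and follows essentially the same route as the paper: cut off near $r=0$ and near the axis $\theta\in\{0,\pi\}$, absorbing the cutoff error into the integrable density $\psi^2/\sin\theta$ supplied by the weight $(r\sin\theta)^{-2}$ in $\|\cdot\|_0$, then reflect evenly across $\{r=1\}$ and mollify. The only cosmetic differences are that the paper performs the two cutoffs simultaneously with a single product cutoff and reflects before cutting off, and it leaves the completeness of $(X_0(D),\|\cdot\|_0)$ implicit.
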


\begin{proof}
It is clear that $C_c^\infty((0,1] \times (0,\pi)) \subset X_0(D)$. Fixing $\psi \in X_0(D)$ and $\vareps > 0$, we proceed to construct $\tilde\psi \in C_c^\infty((0,1] \times (0,\pi))$ such that $\|\psi - \tilde\psi\|_0 \leq \vareps$.

Extending $\psi$ to $D_2 = [0,2] \times [0,\pi]$ by an even reflection across $\{r = 1\}$, we may assume that $\psi$ is defined on $D_2$ and
\begin{equation}
|||\psi|||^2 := \int_{D_2} \Big[(\partial_r \psi)^2   + \frac{1}{r^2} (\partial_\theta \psi)^2 + \frac{1}{r^2 \sin^2 \theta} \psi^2\Big] r^2 \sin\theta \,d\theta\,dr < \infty.
	\label{Eq:Ap1}
\end{equation}
Take a cut-off function $\chi \in C^\infty(\RR)$ such that $\chi \equiv 0$ in $(-1,1)$, $\chi \equiv 1$ in $\RR \setminus (-2,2)$ and $|\chi'| \leq 2$ in $\RR$. For some small $\delta > 0$ to be fixed, let
\begin{align*}
\eta_\delta(r,\theta) &= \chi\big(\frac{r}{\delta}\big) \chi\big(\frac{\theta}{\delta}\big) \chi\big(\frac{\pi - \theta}{\delta}\big) ,\\
\psi_\delta(r,\theta) &= \eta_\delta(r,\theta)  \psi(r,\theta),
\end{align*}
and 
\[
E_\delta = ([0,2\delta] \times [0,\pi]) \cup ([0,2] \times [0,2\delta]) \cup ([0,2] \times [\pi - 2\delta,\pi]).
\]
We have
\begin{align*}
|||\psi - \psi_\delta|||^2
	&= \int_{E_\delta}\Big[((1 - \eta_\delta) \partial_r \psi - \psi \partial_r \eta_\delta)^2    + \frac{1}{r^2} ((1 - \eta_\delta) \partial_\theta \psi - \psi \partial_\theta \eta_\delta)^2 \\
		&\qquad + \frac{1}{r^2 \sin^2 \theta} (1 - \eta_\delta)^2 \psi^2 \Big] r^2 \sin\theta \,d\theta\,dr \\
	&\leq \int_{E_\delta}\Big[2(\partial_r \psi)^2   + \frac{1}{r^2} ( \partial_\theta \psi)^2 + \frac{1}{r^2 \sin^2 \theta}  \psi^2 \Big] r^2 \sin\theta \,d\theta\,dr\\
		&\qquad + \int_{[\delta,2\delta] \times [0,\pi]} 2 \psi^2 (\partial_r \eta_\delta)^2    r^2 \sin\theta \,d\theta\,dr\\
		&\qquad + \int_{[0,2] \times ([\delta,2\delta] \cup [\pi - 2\delta,\pi - \delta]) } 2 \psi^2 (\partial_\theta \eta_\delta)^2    r^2 \sin\theta \,d\theta\,dr\\
	&\leq \int_{E_\delta}\Big[2(\partial_r \psi)^2   + \frac{1}{r^2} ( \partial_\theta \psi)^2 + \frac{1}{r^2 \sin^2 \theta}  \psi^2 \Big] r^2 \sin\theta \,d\theta\,dr\\
		&\qquad + \int_{[\delta,2\delta] \times [0,\pi]} 32 \psi^2   \sin\theta \,d\theta\,dr
			+ \int_{[0,2] \times ([\delta,2\delta] \cup [\pi - 2\delta,\pi - \delta]) } 128 \psi^2   \frac{1}{\sin\theta} \,d\theta\,dr.
\end{align*}
Recalling \eqref{Eq:Ap1}, we may choose $\delta$ sufficiently small such that
\[
|||\psi - \psi_\delta||| \leq \vareps/2.
\]
Noting that the support of $\psi_\delta$ is contained in $D_2 \setminus E_{\delta/2}$, we may then use a standard mollification to obtain $\tilde \psi \in C_c^\infty((0,1] \times (0,\pi))$ such that $\|\psi_\delta - \tilde\psi\|_0 \leq \vareps/2$. The conclusion follows.
\end{proof}

\subsection{Reduced energy for maps with lifting in $X_j(D)$}

We now assume that $u$ is an $O(2)$-equivariant map with a lifting $\psi \in X_j(D)$ and proceed to compute $E[u,\Omega]$. Recall that we assume $k_4 = - k_2$.

We compute
\begin{align*}
\nabla \cdot e_\varphi^{\perp}
	&= \frac{1}{r \sin \theta}, 
	\quad \nabla \cdot e_\varphi = 0,\\
\nabla \times e_\varphi^\perp
	&= 0, 
	\quad \nabla \times e_\varphi = \frac{1}{r \sin\theta} e_3,
\end{align*}
and  
\begin{align*}
\nabla \cdot u
	&=  \cos\psi \sin \theta \partial_r\psi + \frac{1}{r} \cos \psi \cos\theta \partial_\theta \psi    + \frac{1}{r\sin\theta} \sin\psi  \\
		&\qquad- \sin \psi  \cos\theta  \partial_r \psi +  \frac{1}{r} \sin\psi  \sin\theta \partial_\theta \psi \\
	&= - \sin(\psi - \theta) \partial_r\psi +  \frac{1}{r} \cos(\psi - \theta) \partial_\theta  \psi + \frac{1}{r\sin\theta} \sin\psi,\\
\nabla \times u
	&= \cos \psi  \cos\theta \partial_r \psi e_\varphi  - \frac{1}{r}  \cos \psi   \sin\theta  \partial_\theta\psi e_\varphi\\
		&\qquad +  \sin \psi \sin \theta \partial_r \psi e_\varphi +  \frac{1}{r} \sin\psi   \cos\theta  \partial_\theta\psi e_\varphi\\
	&= \Big[\cos(\psi - \theta) \partial_r \psi+  \frac{1}{r} \sin(\psi - \theta)  \partial_\theta \psi\Big]e_\varphi,\\
u \cdot (\nabla \times u)
	&= 0.
\end{align*}
Therefore
\begin{align}
E[u,\Omega]
	&= \pi \int_0^1 \int_0^\pi \Big\{k_1\Big[- \sin(\psi - \theta) \partial_r\psi +  \frac{1}{r} \cos(\psi - \theta) \partial_\theta  \psi + \frac{1}{r\sin\theta} \sin\psi\Big]^2\nonumber\\
		&\qquad + k_3 \Big[\cos(\psi - \theta) \partial_r \psi+  \frac{1}{r} \sin(\psi - \theta)  \partial_\theta \psi\Big]^2\Big\}\,r^2\sin\theta\,d\theta\,dr\nonumber\\
	&= \pi \int_0^1 \int_0^\pi \Big\{
		\big(k_1 \sin^2(\psi - \theta) + k_3 \cos^2(\psi - \theta) \big) (\partial_r\psi)^2\nonumber\\
		&\qquad + \frac{1}{r^2} \big(k_1 \cos^2(\psi - \theta) + k_3 \sin^2(\psi - \theta) \big) (\partial_\theta\psi)^2\nonumber\\
		&\qquad - \frac{2(k_1 - k_3)}{r} \sin(\psi - \theta) \cos(\psi - \theta) \partial_r \psi\partial_\theta \psi\nonumber\\
		&\qquad + \frac{2k_1}{r \sin\theta} \Big[ - \sin(\psi - \theta) \partial_r\psi +  \frac{1}{r} \cos(\psi - \theta) \partial_\theta  \psi\Big] \sin\psi\nonumber\\
		&\qquad + \frac{k_1}{r^2} \csc^2\theta \sin^2\psi
		\Big\}\,r^2\sin\theta\,d\theta\,dr.
	\label{Eq:RE-1}
\end{align}
To handle the term on the second-to-last line, we use the following lemma:

\begin{lemma}\label{Lem:ABCD}
Let
\begin{align*}
(A, B): \RR \times [0,\pi]& \rightarrow  \RR^2 \\
(s,\theta) &\mapsto  (A(s,\theta),B(s,\theta)) 
\end{align*}
be a smooth bounded map satisfying
\[
\partial_\theta B = - A
\]
If $\psi \in X_j(D)$ for some $j \in \ZZ$, then
\[
\int_0^1 \int_0^\pi [r A(\psi,\theta) \partial_r \psi + B(\psi,\theta) \partial_\theta \psi]\,d\theta\,dr
	= \int_0^\pi \int_0^{\psi(1,\theta)} A(s,\theta)\,ds\,d\theta 
		+ \int_0^{j\pi} B(s,\pi)\,ds.
\]
\end{lemma}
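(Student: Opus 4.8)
The plan is to recognise the integrand as an exact divergence on the rectangle $[0,1]\times[0,\pi]$ and to integrate by parts, the weight $r$ and the hypothesis $\partial_\theta B = -A$ being exactly what is needed to make the boundary terms produce the right-hand side. Set
\[
G(r,\theta) = \int_0^{\psi(r,\theta)} A(s,\theta)\,ds,
	\qquad
H(r,\theta) = \int_0^{\psi(r,\theta)} B(s,\theta)\,ds .
\]
Since $A,B$ are smooth and bounded, $G$ and $H$ lie in $H^1_{\rm loc}(D)$ (they are $C^1$ functions of $(\psi,\theta)$ composed with $\psi\in H^1_{\rm loc}(D)$), and the chain rule together with $\partial_\theta B=-A$ gives
\[
\partial_r\big(r\,G\big) = G + r\,A(\psi,\theta)\,\partial_r\psi ,
	\qquad
\partial_\theta H = B(\psi,\theta)\,\partial_\theta\psi + \int_0^{\psi}\partial_\theta B(s,\theta)\,ds = B(\psi,\theta)\,\partial_\theta\psi - G .
\]
Adding, the $G$ cancels and
\[
r\,A(\psi,\theta)\,\partial_r\psi + B(\psi,\theta)\,\partial_\theta\psi = \partial_r\big(r\,G\big) + \partial_\theta H
\]
in $L^1_{\rm loc}(D)$. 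Integrating over $[0,1]\times[0,\pi]$ and using the divergence theorem: the edge $\{r=0\}$ contributes nothing thanks to the weight $r$; the edge $\{r=1\}$ contributes $\int_0^\pi G(1,\theta)\,d\theta=\int_0^\pi\int_0^{\psi(1,\theta)}A(s,\theta)\,ds\,d\theta$; and the edges $\{\theta=0\},\{\theta=\pi\}$ contribute $\int_0^1\big(H(r,\pi)-H(r,0)\big)\,dr$. Since $\psi\in X_j(D)$ has axial traces $\psi(\cdot,0)=0$ and $\psi(\cdot,\pi)=j\pi$ (by a Hardy argument as in Lemma~\ref{Lem:XjAxis}), the last quantity equals $\int_0^{j\pi}B(s,\pi)\,ds$, which is the asserted identity.

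To make this rigorous one must deal with the fact that $\psi$ is only $H^1_{\rm loc}(D)$ and that the quantities controlled by $X_j(D)$ degenerate at the poles $\theta\in\{0,\pi\}$ and at $r=0$. I would first carry out the computation on a truncated rectangle $D_{\rho,\varepsilon}=[\rho,1]\times[\varepsilon,\pi-\varepsilon]$, on which $\psi$ is a genuine $H^1$ map, so that the divergence theorem applies and yields four honest boundary integrals plus the interior integral; then I would let $\varepsilon\to0$ (using the axial traces and boundedness of $B$ to see $H(r,\varepsilon)\to0$ and $H(r,\pi-\varepsilon)\to\int_0^{j\pi}B(s,\pi)\,ds$) and finally $\rho\to0$ (the face $\{r=\rho\}$ contributing $\rho\int_\varepsilon^{\pi-\varepsilon}G(\rho,\theta)\,d\theta$, which tends to $0$ because $\rho|G(\rho,\theta)|\le\rho\|A\|_\infty|\psi(\rho,\theta)|$ and $|\psi(\rho,\theta)|=O(\rho^{-1/2})$ on compact $\theta$-subintervals by the one-dimensional Hardy bound coming from $\int_D r^2\sin\theta\,(\partial_r\psi)^2<\infty$). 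The left-hand side is then understood as the corresponding iterated (improper) integral, consistently with the statement. A variant — available when $\psi-j\theta\in X_0(D)$ — is to approximate $\psi$ by $j\theta+\tilde\psi_k$ with $\tilde\psi_k\in C^\infty_c((0,1]\times(0,\pi))$ via Lemma~\ref{Lem:XjDen}, for which the identity is immediate from the fundamental theorem of calculus, and then pass to the limit.

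The main obstacle is precisely this passage to the limit: controlling the boundary terms at $\theta=\varepsilon,\pi-\varepsilon$ and at $r=\rho$, and establishing continuity of both sides in $\psi$. This is comfortable because of two facts — the axial vanishing of $\psi-j\theta$, and, crucially in every application of the lemma, the decay of the coefficients at the poles. In the reduced energy \eqref{Eq:RE-1} one has $A(s,\theta)=-2k_1\sin(s-\theta)\sin s$ and $B(s,\theta)=2k_1\cos(s-\theta)\sin s$, so $|A(s,\theta)|,|B(s,\theta)|\le 2k_1\min(|s|,|s-j\pi|)$; combined with weighted Cauchy--Schwarz against the three terms of the $X_j(D)$-norm, this makes the left-hand side absolutely convergent and Lipschitz-continuous in $\psi\in X_j(D)$, so the limit goes through without further difficulty.
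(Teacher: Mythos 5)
Your argument is correct and rests on exactly the same identity as the paper's: your potentials $G(r,\theta)=\int_0^{\psi}A(s,\theta)\,ds$ and $H(r,\theta)=\int_0^{\psi}B(s,\theta)\,ds$ are precisely explicit formulas for the functions $C(\psi,\theta)$ and $D(\psi,\theta)$ that the paper produces abstractly (via simple-connectivity and the compatibility $\partial_\theta B=-A=\partial_s(-C)$), and the boundary bookkeeping $\int_0^\pi G(1,\theta)\,d\theta+\int_0^1\bigl(H(r,\pi)-H(r,0)\bigr)\,dr$ is identical. Where you genuinely differ is in the regularization: the paper's sole device is the density Lemma \ref{Lem:XjDen}, reducing at the outset to $\psi-j\theta\in C_c^\infty((0,1]\times(0,\pi))$ and then computing for smooth data, whereas you take truncation of the rectangle plus a double limit as the primary route and mention density only as a variant. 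The truncation route costs you the extra estimates you list (the $O(\rho^{-1/2})$ growth at $r=0$, the axial traces at $\theta=0,\pi$ via the Lemma \ref{Lem:XjAxis} argument), but it buys a payoff the paper's one-line reduction (``noting that $A$ and $B$ are bounded'') quietly skips: the continuity of the left-hand side in $\psi$. Your closing observation is the right one here --- for merely bounded $A,B$ the integrand need not be absolutely integrable over $D$ for a general $\psi\in X_j(D)$ (e.g.\ $A\equiv 0$, $B\equiv 1$ and a $\psi$ whose $\theta$-oscillations near the pole have square-summable but not summable dyadic amplitudes), so the left-hand side must be read as an iterated integral in general; it is the decay $|A(s,\theta)|,|B(s,\theta)|\lesssim|\sin s|$ of the coefficients in both applications (\eqref{Eq:RE-1} and Proposition \ref{Prop:Min}), paired with Cauchy--Schwarz against the $X_j(D)$-norm, that restores absolute convergence and makes either limiting argument go through. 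So: same decomposition, a different and somewhat more laborious regularization, and a worthwhile clarification of a point the paper elides.
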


\begin{proof}
Using Lemma \ref{Lem:XjDen} and noting that $A$ and $B$ are bounded, we can assume without loss of generality that $\psi - j\theta \in C_c^\infty((0,1] \times (0,\pi))$. In particular,
\[
\psi(r,0) = 0 \text{ and } \psi(r,\pi) = j\pi.
\]

We start by looking for $C$ and $D$ such that
\[
r A(\psi,\theta) \partial_r \psi + B(\psi,\theta) \partial_\theta \psi
	= \partial_r (r C(\psi,\theta)) + \partial_\theta (D(\psi,\theta)).
\]
This leads to the system
\[
\begin{cases}
\partial_s C = A,\\
\partial_s D = B,\\
C = - \partial_\theta D.
\end{cases}
\]
First fix $C$ by
\[
C(s,\theta) = \int_0^s A(\xi,\theta)d\xi
\]
so that $\partial_s C = A$ and $C(0,\cdot) = 0$. It remains to solve
\[
\begin{cases}
\partial_s D = B,\\
\partial_\theta D = - C.
\end{cases}
\]
Since $\RR \times [0,\pi]$ is simply connected and, $\partial_\theta B = -A =  \partial_s (-C)$ (by hypothesis), such $D$ exists uniquely with the normalization condition $D(0,0) = 0$. We note that
\begin{align*}
 D(0,\pi)
	&= D(0,0) + \int_0^\pi \partial_\theta D(0,\theta)d\theta = - \int_0^\pi C(0,\theta)\,d\theta = 0,\\
D(j\pi,\pi) 
	&= D(0,\pi) + \int_0^{j\pi} \partial_s D(s,\pi)\,ds = \int_0^{j\pi} B(s,\pi)\,ds.
\end{align*}
Finally, we compute
\begin{align*}
&\int_0^1 \int_0^\pi [r A(\psi,\theta) \partial_r \psi + B(\psi,\theta) \partial_\theta \psi]\,d\theta\,dr\\
	&\qquad= \int_0^1 \int_0^\pi [\partial_r(r C(\psi,\theta)) +  \partial_\theta (D(\psi,\theta))]\,d\theta\,dr\\
	&\qquad= \int_0^\pi C(\psi(1,\theta),\theta)\,d\theta
		+ \int_0^1 D(\psi(r,\pi),\pi)\,dr - \int_0^1 D(\psi(r,0),0)\,dr \\
	&\qquad= \int_0^\pi \int_0^{\psi(1,\theta)} A(s,\theta)\,ds\,d\theta 
		+ \int_0^{j\pi} B(s,\pi)\,ds.
\end{align*}
The conclusion follows.
\end{proof}

Applying the lemma with 
\[
A(s,\theta) = - \sin(s - \theta) \sin s \quad \text{ and } \quad B(s,\theta) = \cos(s - \theta) \sin s,
\]
we get
\begin{align*}
&\int_0^1 \int_0^\pi   \Big[ - \sin(\psi - \theta) \partial_r\psi +  \frac{1}{r} \cos(\psi - \theta) \partial_\theta  \psi\Big] \sin\psi\,r \,d\theta\,dr\\
	&\qquad=
		- \int_0^\pi \int_0^{\psi(1,\theta)} \sin(s - \theta) \sin s\,ds\,d\theta 
		- \int_0^{j\pi} \cos s \sin s\,ds\\
	&\qquad=
		- \int_0^\pi \int_0^{\psi(1,\theta)} \big( \sin^2 s \cos\theta - \sin s \cos s \sin \theta\big)\,ds\,d\theta 
		 \\
	&\qquad= - \frac{1}{2} \int_0^\pi \Big[ (\psi - \sin\psi \cos\psi)\cos\theta - \sin^2\psi \sin\theta\Big]_{r = 1} \,d\theta.
\end{align*}
Inserting this into \eqref{Eq:RE-1}, we obtain:

\begin{lemma}\label{Lem:RE}
Let $u \in H^1(\Omega,\mathbb{S}^2)$ be an $O(2)$-equivariant map with a lifting $\psi \in X_j(D)$ for some $j \in \ZZ$. Then
\begin{align*}
E[u,\Omega]
	&= J[\psi] := \pi \int_0^1 \int_0^\pi \Big\{
		\big(k_1 \sin^2(\psi - \theta) + k_3 \cos^2(\psi - \theta) \big)  (\partial_r\psi)^2\\
		&\qquad + \frac{1}{r^2} \big(k_1 \cos^2(\psi - \theta) + k_3 \sin^2(\psi - \theta) \big)  (\partial_\theta\psi)^2\\
		&\qquad - \frac{2(k_1 - k_3)}{r} \sin(\psi - \theta) \cos(\psi - \theta) \partial_r \psi\partial_\theta \psi\\
		&\qquad + \frac{k_1}{r^2} \csc^2\theta \sin^2\psi
		\Big\}\,r^2\sin\theta\,d\theta\,dr\\
		&\qquad + k_1 \pi \int_0^\pi \Big[ (\psi - \sin\psi \cos\psi)\cos\theta - \sin^2\psi \sin\theta\Big]_{r = 1} \,d\theta.
\end{align*}
\end{lemma}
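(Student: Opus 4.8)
\noindent{\bf Proof strategy.} The plan is to derive the statement directly from the identity \eqref{Eq:RE-1}, which already rewrites $E[u,\Omega]$ as an integral over $(0,1)\times(0,\pi)$ of an explicit Lagrangian in $\psi$ and $\nabla\psi$: this reduction is available because $O(2)$-equivariance forces the form \eqref{Eq:Ansatz1}, the relation $u\cdot(\nabla\times u)\equiv 0$ removes the $k_2$-term in $W$, and the normalisation $k_4=-k_2$ kills the null-Lagrangian term. Comparing \eqref{Eq:RE-1} term by term with the asserted formula for $J[\psi]$, the terms quadratic in $\nabla\psi$ (including the cross term $\partial_r\psi\,\partial_\theta\psi$) and the zeroth-order term $\tfrac{k_1}{r^2}\csc^2\theta\,\sin^2\psi$ already match; the only discrepancy is the term that is \emph{linear} in $\nabla\psi$, which, after multiplication by the volume factor $r^2\sin\theta$, reads
\[
2k_1\big[r\,A(\psi,\theta)\,\partial_r\psi+B(\psi,\theta)\,\partial_\theta\psi\big],\qquad A(s,\theta)=-\sin(s-\theta)\sin s,\quad B(s,\theta)=\cos(s-\theta)\sin s.
\]
Being linear in $\nabla\psi$, this term ought to integrate to a pure boundary contribution, and Lemma \ref{Lem:ABCD} is exactly the device for extracting it.

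The first step is then to verify the hypothesis of Lemma \ref{Lem:ABCD}: the map $(A,B)$ is smooth and bounded on $\RR\times[0,\pi]$, and $\partial_\theta B(s,\theta)=\sin(s-\theta)\sin s=-A(s,\theta)$. Applying the lemma to $\psi\in X_j(D)$ converts the linear term into $2k_1\pi\big[\int_0^\pi\!\int_0^{\psi(1,\theta)}A(s,\theta)\,ds\,d\theta+\int_0^{j\pi}B(s,\pi)\,ds\big]$. The $\int_0^{j\pi}$ summand vanishes, since $B(s,\pi)=-\cos s\sin s$ and $\int_0^{j\pi}\cos s\sin s\,ds=\tfrac12\sin^2(j\pi)=0$. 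The remaining double integral is evaluated using the elementary antiderivatives $\int_0^{\Psi}\sin^2 s\,ds=\tfrac12(\Psi-\sin\Psi\cos\Psi)$ and $\int_0^{\Psi}\sin s\cos s\,ds=\tfrac12\sin^2\Psi$, and, up to the overall prefactor, it reproduces the boundary integral $\int_0^\pi\big[(\psi-\sin\psi\cos\psi)\cos\theta-\sin^2\psi\sin\theta\big]_{r=1}\,d\theta$ appearing in $J[\psi]$ (here one should keep careful track of the overall sign and of the factor $2$). Substituting this back into \eqref{Eq:RE-1} yields $E[u,\Omega]=J[\psi]$.

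The one point that goes beyond routine bookkeeping is the legitimacy of Lemma \ref{Lem:ABCD} for an arbitrary $\psi\in X_j(D)$, as opposed to a smooth one: the proof of that lemma first reduces to the case $\psi-j\theta\in C_c^\infty((0,1]\times(0,\pi))$ by the density statement of Lemma \ref{Lem:XjDen} and approximation in the $\|\cdot\|_0$-norm, and there the boundedness of $A$ and $B$ is precisely what makes every term of the identity continuous under $\|\cdot\|_0$-convergence (and what gives meaning to the integral $\int_0^{\psi(1,\theta)}A(s,\theta)\,ds$ for the limiting $\psi$). One should also record that the $r=1$ integral in $J[\psi]$ is understood in the trace sense, which is legitimate because $u\in H^1(\Omega,\mathbb{S}^2)$ has a well-defined trace on $\partial\Omega$.
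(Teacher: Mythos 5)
Your proposal follows the paper's own derivation essentially verbatim: the paper obtains \eqref{Eq:RE-1}, applies Lemma \ref{Lem:ABCD} with the same pair $A(s,\theta)=-\sin(s-\theta)\sin s$, $B(s,\theta)=\cos(s-\theta)\sin s$, and evaluates the resulting integrals with the same elementary antiderivatives; your checks that $\partial_\theta B=-A$, that $\int_0^{j\pi}B(s,\pi)\,ds=0$, and your remarks on the role of Lemma \ref{Lem:XjDen} and of the trace at $r=1$ all coincide with what the paper does.

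The one step you defer --- ``keep careful track of the overall sign and of the factor $2$'' --- is exactly where the only real issue sits, so you should not leave it implicit. The linear term in \eqref{Eq:RE-1}, after multiplication by the volume factor $r^2\sin\theta$, is $2k_1\big[rA(\psi,\theta)\,\partial_r\psi+B(\psi,\theta)\,\partial_\theta\psi\big]$, and Lemma \ref{Lem:ABCD} evaluates
\[
\int_0^1\!\!\int_0^\pi\big[rA(\psi,\theta)\,\partial_r\psi+B(\psi,\theta)\,\partial_\theta\psi\big]\,d\theta\,dr
=-\tfrac12\int_0^\pi\big[(\psi-\sin\psi\cos\psi)\cos\theta-\sin^2\psi\sin\theta\big]_{r=1}\,d\theta,
\]
so its contribution to $E[u,\Omega]$ is $-k_1\pi\int_0^\pi\big[\cdots\big]_{r=1}\,d\theta$: the boundary term enters with a \emph{minus} sign relative to the displayed formula for $J[\psi]$. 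A sanity check with $\psi=\theta$ confirms this: there $E[u,\Omega]=8k_1\pi$ by direct computation, the bulk integral in $J$ equals $4k_1\pi$, and $\int_0^\pi\big[(\theta-\sin\theta\cos\theta)\cos\theta-\sin^3\theta\big]\,d\theta=-4$, so only the coefficient $-k_1\pi$ makes the identity balance. Thus your computation, carried to the end, proves the lemma with the boundary term negated; as written, your assertion that the bookkeeping ``reproduces the boundary integral appearing in $J[\psi]$'' is off by a sign. (The same slip occurs in the paper's final insertion step; it is harmless for Proposition \ref{Prop:Min}, where the boundary term is common to all competitors with the same trace, but in a submitted proof you should state the corrected sign explicitly.)
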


\begin{corollary}\label{Cor:EL}
Let $u \in H^1(\Omega,\mathbb{S}^2)$ be an $O(2)$-equivariant map with a lifting $\psi \in X_j(D)$ for some $j \in \ZZ$. Then $u$ is critical for $E$ if and only if $\psi$ satisfies in $D$ the equation
\begin{align}
&-\partial_r \Big\{ r^2 \sin\theta \big(k_1 \sin^2(\psi - \theta) + k_3 \cos^2(\psi - \theta) \big)  \partial_r\psi \nonumber\\
		&\qquad -  (k_1 - k_3) r \sin\theta \sin(\psi - \theta) \cos(\psi - \theta)  \partial_\theta \psi		\Big\}\nonumber\\
	&\quad - \partial_\theta \Big\{
		  \sin\theta \big(k_1 \cos^2(\psi - \theta) + k_3 \sin^2(\psi - \theta) \big)  \partial_\theta\psi\nonumber \\
		&\qquad -  (k_1 - k_3) r \sin\theta \sin(\psi - \theta) \cos(\psi - \theta) \partial_r \psi  
		\Big\}\nonumber\\
	&\quad + \Big\{
		(k_1 - k_3) \sin (\psi - \theta)   \cos (\psi - \theta) \Big[(\partial_r\psi)^2 
			-  \frac{1}{r^2}  (\partial_\theta\psi)^2\Big]\nonumber\\
		&\qquad - \frac{(k_1 - k_3)}{r} \big(\cos^2(\psi - \theta) - \sin^2(\psi - \theta)\big)  \partial_r \psi\partial_\theta \psi\nonumber\\
		&\qquad + \frac{k_1}{r^2} \csc^2\theta \sin\psi \cos\psi
		\Big\}\,r^2\sin\theta
		= 0.
		\label{Eq:EL-1}
\end{align}
\end{corollary}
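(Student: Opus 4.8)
The plan is to use the identity $E[u,\Omega]=J[\psi]$ of Lemma~\ref{Lem:RE} to convert criticality of $u$ into criticality of the scalar functional $J$. Note first that the boundary term
\[
k_1\pi\int_0^\pi\Big[(\psi-\sin\psi\cos\psi)\cos\theta-\sin^2\psi\sin\theta\Big]_{r=1}\,d\theta
\]
in $J[\psi]$ depends only on the trace $\psi(1,\cdot)$, so it is unchanged by any variation $\psi\mapsto\psi+s\phi$ with $\phi\in C_c^\infty(D)$; moreover $\psi+s\phi\in X_j(D)$ for such $\phi$, since the perturbation is supported away from $\{r=0\}$, $\{r=1\}$ and the axis, so $s\mapsto J[\psi+s\phi]$ is a well-defined (in fact quadratic) function of $s$. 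It therefore suffices to prove: (a) $\frac{d}{ds}\big|_{s=0}J[\psi+s\phi]=0$ for every $\phi\in C_c^\infty(D)$ precisely when $\psi$ solves \eqref{Eq:EL-1} in $D$ distributionally; and (b) $u$ is critical for $E$ if and only if $\psi$ is critical for $J$ with respect to all such $\phi$.

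Step (a) is a routine first-variation computation. Writing the bulk integrand of $J$, including the weight $r^2\sin\theta$, as $F(r,\theta,\psi,\partial_r\psi,\partial_\theta\psi)$, one differentiates under the integral and integrates by parts (legitimate because $\phi$ has compact support in $D$, on which $\psi\in H^1$ and the trigonometric coefficients are smooth and bounded) to get
\[
\frac{d}{ds}\Big|_{s=0}J[\psi+s\phi]=\pi\int_0^1\int_0^\pi\big(F_\psi-\partial_r F_{\partial_r\psi}-\partial_\theta F_{\partial_\theta\psi}\big)\,\phi\,d\theta\,dr.
\]
A direct calculation then identifies $F_\psi-\partial_r F_{\partial_r\psi}-\partial_\theta F_{\partial_\theta\psi}$ with twice the left-hand side of \eqref{Eq:EL-1}; the only place requiring care with cancellations is the mixed term $-\tfrac{2(k_1-k_3)}{r}\sin(\psi-\theta)\cos(\psi-\theta)\,\partial_r\psi\,\partial_\theta\psi$, which feeds both the two divergence brackets of \eqref{Eq:EL-1} (through $F_{\partial_r\psi}$ and $F_{\partial_\theta\psi}$) and the zeroth-order bracket (through $\partial_\psi$ of its coefficient), alongside the contributions of $\partial_\psi\sin^2(\psi-\theta)$, $\partial_\psi\cos^2(\psi-\theta)$ and $\partial_\psi\sin^2\psi$. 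Since $\phi\in C_c^\infty(D)$ is arbitrary, vanishing of the first variation is equivalent to \eqref{Eq:EL-1}.

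Step (b) is the point of substance. The forward direction is immediate: given $\phi\in C_c^\infty(D)$, the $O(2)$-equivariant maps $u_s$ with liftings $\psi+s\phi$ form an admissible variation of $u$ — each $u_s\in H^1(\Omega,\mathbb{S}^2)$ by Lemma~\ref{Lem:O2Lift} and agrees with $u$ near $\partial\Omega$ — and $E[u_s,\Omega]=J[\psi+s\phi]$ by Lemma~\ref{Lem:RE}, so criticality of $u$ forces $\frac{d}{ds}\big|_{s=0}J[\psi+s\phi]=0$. For the converse one invokes \emph{Palais' principle of symmetric criticality}: $O(2)$ acts on $H^1(\Omega,\mathbb{S}^2)$ by the isometries $u\mapsto Ru(R^{-1}\cdot)$, the functional $E$ is invariant under this action because $W$ is rotationally equivariant, and the fixed-point set of the action is exactly the class of $O(2)$-equivariant maps; hence a map critical for $E$ within the equivariant class is critical for $E$ among all competitors. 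Concretely, an arbitrary compactly supported tangent variation of $u$ can be averaged over $O(2)$ without changing the first variation of $E$, and the averaged variation is induced, via the lifting of Lemma~\ref{Lem:O2Lift}, by some scalar $\phi$. (That this variational notion of criticality coincides with \eqref{Eq:ELFull} is the usual projection argument for $\mathbb{S}^2$-valued maps and needs no symmetry.)

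The main obstacle is making step (b) fully rigorous near the symmetry axis $\theta\in\{0,\pi\}$, where the moving frame $\{e_\varphi,e_\varphi^\perp,e_3\}$ degenerates, so that the correspondence ``equivariant variation of $u$ $\leftrightarrow$ scalar variation of $\psi$'' is not transparent there. This is handled using the trace identity $\cos^2\psi=1$ on the axis from Lemma~\ref{Lem:XjAxis} together with the density Lemma~\ref{Lem:XjDen}, which reduce matters to variations supported strictly away from $\{\theta=0,\pi\}$ (and from $\{r=0\}$, so the singularity of $u$ at the origin is never felt). With this reduction in place, (a) and (b) combine to give the stated equivalence.
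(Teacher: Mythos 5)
Your overall architecture is the same as the paper's: reduce to the scalar functional $J$ via Lemma \ref{Lem:RE}, identify the first variation of the bulk integrand with (twice) the left-hand side of \eqref{Eq:EL-1} (your bookkeeping of the mixed term and the $\partial_\psi$-contributions is correct), use equivariant variations for the forward implication, and invoke Palais' principle of symmetric criticality for the converse. The one place where you diverge is also the one place where your argument has a genuine gap: the extension of the converse across the symmetry axis. Averaging/Palais gives you that $u$ satisfies \eqref{Eq:ELFull} weakly in $\Omega\setminus\{x_1=x_2=0\}$, because only there is the correspondence between equivariant tangent fields along $u$ and scalar perturbations of $\psi$ a clean chart. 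But ``critical for $E$'' in this paper means that \eqref{Eq:ELFull} holds in all of $\Omega$, i.e.\ tested against arbitrary tangent variations, including ones whose support meets the axis. Your proposed fix --- that Lemmas \ref{Lem:XjAxis} and \ref{Lem:XjDen} ``reduce matters to variations supported strictly away from $\{\theta=0,\pi\}$'' --- lives entirely on the scalar side: density of $C_c^\infty$ in $X_0(D)$ tells you something about admissible perturbations of $\psi$, not about how to test the weak form of the vector-valued equation \eqref{Eq:ELFull} across the set $\{x_1=x_2=0\}$. As written, this step does not close.

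The paper's missing ingredient is a removable-singularity argument: a line (and the origin) has zero Newtonian capacity in $\RR^3$, so one multiplies an arbitrary test field by cutoffs $\eta_\vareps$ vanishing near the axis with $\int|\nabla\eta_\vareps|^2\to 0$ and passes to the limit in the weak formulation, upgrading criticality on $\Omega\setminus\{x_1=x_2=0\}$ to criticality on $\Omega$. If you prefer to stay on the scalar side, your density route can be rescued, but it needs more than you wrote: you must check that the $O(2)$-average $\bar v$ of a smooth compactly supported tangent variation vanishes at least linearly at the axis (it does, since an equivariant tangent vector at an axis point is both parallel and orthogonal to $e_3$), so that the induced scalar direction $\phi$ satisfies $\int \frac{\phi^2}{r^2\sin^2\theta}\,r^2\sin\theta\,d\theta\,dr<\infty$ and hence lies in the closure described by Lemma \ref{Lem:XjDen}; and you must verify that the first variation of $J$ at $\psi$ passes to the limit along the cutoff approximations $\eta_\delta\phi$ (the quadratic terms $(\partial_r\psi)^2\phi$, etc., require a dominated-convergence argument using the boundedness of $\phi$, not just $\|\cdot\|_0$-continuity). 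Either repair is standard, but one of them must be supplied; naming Lemmas \ref{Lem:XjAxis} and \ref{Lem:XjDen} alone does not do the job.
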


\begin{proof}
It is straightforward from Lemma \ref{Lem:RE} that if $u$ is critical for $E[u;\Omega]$, then $\psi$ satisfies \eqref{Eq:EL-1} in $D$. Invoking the principal of symmetric criticality \cite{Palais-CMP79} (or by direct computation), if $\psi$ satisfies \eqref{Eq:EL-1} in $D$, then $u$ is critical for $E[u;\Omega\setminus \{x_1 = x_2 = 0\}]$, that is \eqref{Eq:ELFull} holds in $\Omega\setminus \{x_1 = x_2 = 0\}$. Since a line has zero Newtonian capacity, it is standard to show that \eqref{Eq:ELFull} holds in all of $\Omega$.
\end{proof}

\subsection{Zero-homogeneous and $O(2)$-equivariant critical points}

We now suppose that $u$ is a zero-homogeneous and $O(2)$-equivariant critical point of $E$ with a lifting $\psi \in X_j(D)$ for some $j \in \ZZ$. Then $\psi$ is independent of $r$, that is $\psi = \psi(\theta)$. Moreover, by \eqref{Eq:EL-1},
\begin{align*}
&       (k_1 - k_3)   \sin\theta \sin(\psi - \theta) \cos(\psi - \theta)    \psi' \\
	&\quad - \frac{d}{d\theta} \Big\{
		  \sin\theta \big(k_1 \cos^2(\psi - \theta) + k_3 \sin^2(\psi - \theta) \big)   \psi'  \Big\}\\
	&\quad + \Big\{
		-   (k_1 - k_3) \sin (\psi - \theta)   \cos (\psi - \theta)   ( \psi')^2 \\
		&\qquad  + k_1 \csc^2\theta \sin\psi \cos\psi
		\Big\}\, \sin\theta
		= 0 \text{ in } (0,\pi).
\end{align*}
This can be rearranged as
\begin{align}
&\frac{d}{d\theta} \Big[\sin\theta \big(k_1 \cos^2(\psi - \theta) + k_3 \sin^2(\psi - \theta)\big)^{1/2} \psi'\Big]\nonumber\\
	&\qquad
 - k_1 \csc  \theta \big(k_1 \cos^2(\psi - \theta) + k_3 \sin^2(\psi - \theta)\big)^{-1/2} \sin \psi \cos \psi = 0 \text{ in } (0,\pi).
 	\label{Eq:ELy}
\end{align}
It follows that
\begin{align*}
&\frac{d}{d\theta} \Big[\sin^2\theta \big(k_1 \cos^2(\psi - \theta) + k_3 \sin^2(\psi - \theta)\big) ( \psi') ^2\Big]\\
	&\qquad = 2 k_1 \sin \psi \cos \psi \psi' = k_1 \frac{d}{d\theta}\big( \sin^2\psi\big) \text{ in } (0,\pi).
\end{align*}
Therefore, there exists some constant $C$ such that
\begin{equation}
\sin^2\theta \big(k_1 \cos^2(\psi - \theta) + k_3 \sin^2(\psi - \theta)\big) ( \psi') ^2 
	- k_1\sin^2\psi = C \text{ in } (0,\pi).
	\label{Eq:FI-1}
\end{equation}

Noting that the requirement that $\psi \in X(D)$ amounts to
\[
\int_0^\pi \Big[(\psi')^2 + \frac{1}{\sin^2\theta} \sin^2\psi\Big]\,\sin\theta\,d\theta < \infty.
\]
Thus, when dividing the left hand side of \eqref{Eq:FI-1} by $\sin\theta$ we obtain a quantity which is integrable over $(0,\pi)$. This implies that $\frac{C}{\sin\theta}$ is integrable over $(0,\pi)$, and so $C = 0$. We thus have
\begin{equation}
\sin^2\theta \big(k_1 \cos^2(\psi - \theta) + k_3 \sin^2(\psi - \theta)\big) ( \psi') ^2 
	- k_1\sin^2\psi = 0.
	\label{Eq:1stInt}
\end{equation}
We are led to consider the differential equations
\begin{equation}
\psi' = \frac{k_1^{1/2}\sin\psi}{\sin \theta \big(k_1  \cos^2(\psi - \theta) + k_3 \sin^2(\psi - \theta)\big)^{1/2}}
	\label{Eq:1Ix}
\end{equation}
and
\begin{equation}
\psi' = -\frac{k_1^{1/2}\sin\psi}{\sin \theta \big(k_1  \cos^2(\psi - \theta) + k_3 \sin^2(\psi - \theta)\big)^{1/2}}
	\label{Eq:1Iy}
\end{equation}
subjected to an initial value e.g.
\begin{equation}
\psi(\pi/2) = t \in \RR.
	\label{Eq:1II}
\end{equation}
The choice of $\theta = \pi/2$ as an initial point is ad hoc and unimportant. Note that \eqref{Eq:1Iy} can be recast as \eqref{Eq:1Ix} by the change of variable $\psi \mapsto \psi - \pi$.

\begin{proposition}\label{Prop:ODE}
\begin{enumerate}[(i)]
\item If $t \in \pi \ZZ$, then the initial value problem \eqref{Eq:1Ix} and \eqref{Eq:1II} has the unique solution $\psi = t$.

\item If $t \in (2\ell\pi, (2\ell+1)\pi)$ for some $\ell \in \ZZ$, then the initial value problem \eqref{Eq:1Ix} and \eqref{Eq:1II} has the unique solution $\psi \in C^\infty((0,\pi)) \cap C^{2,1}([0,\pi])$, which is strictly increasing and satisfies $\psi(0) = 2\ell\pi$ and $\psi(\pi) = (2\ell+1)\pi$.

\item If $t \in ((2\ell+1)\pi, (2\ell+2)\pi)$ for some $\ell \in \ZZ$, then the initial value problem \eqref{Eq:1Ix} and \eqref{Eq:1II} has the unique solution $\psi \in C^\infty((0,\pi)) \cap C^{2,1}([0,\pi])$, which is strictly decreasing and satisfies $\psi(0) = (2\ell + 2)\pi$ and $\psi(\pi) = (2\ell+1)\pi$.
\end{enumerate}
In all cases, we have
\[
\frac{\sin \psi}{\sin \theta} \in C^{1,1}([0,\pi]).
\]
\end{proposition}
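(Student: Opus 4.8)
The plan is to combine elementary ODE theory on compact subintervals of $(0,\pi)$ with a careful analysis at the two singular endpoints $\theta=0,\pi$. Observe first that the right-hand side of \eqref{Eq:1Ix},
\[
F(\theta,\psi)=\frac{k_1^{1/2}\sin\psi}{\sin\theta\,\bigl(k_1\cos^2(\psi-\theta)+k_3\sin^2(\psi-\theta)\bigr)^{1/2}},
\]
is smooth on $(0,\pi)\times\RR$, since the bracket lies between $\min(k_1,k_3)$ and $\max(k_1,k_3)$ and $\sin\theta>0$ there; hence Picard--Lindel\"of applies on every $[\theta_1,\theta_2]\Subset(0,\pi)$. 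Case (i) is immediate: if $t\in\pi\ZZ$ then $F(\cdot,t)\equiv 0$, so $\psi\equiv t$ solves \eqref{Eq:1Ix}--\eqref{Eq:1II} and is the only solution by uniqueness. For $t\notin\pi\ZZ$, since $F$ is $2\pi$-periodic in $\psi$, the shift $\psi\mapsto\psi-2\ell\pi$ reduces (ii) and (iii) to $t\in(0,\pi)$ and $t\in(\pi,2\pi)$ respectively; the second is handled exactly like the first with $\sin t<0$ in place of $\sin t>0$ (so $\psi$ decreasing rather than increasing, and with endpoint values $2\pi$, $\pi$ in place of $0$, $\pi$), so I describe the argument for $t\in(0,\pi)$.

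Next I would pin down the qualitative picture by a trapping argument. On the open strip $\psi\in(0,\pi)$ one has $\sin\psi>0$, hence $F>0$; and $\psi$ cannot take a value in $\{0,\pi\}$ at any interior $\theta_0$, for then the constant $\psi\equiv\psi(\theta_0)$ would be the unique solution through $(\theta_0,\psi(\theta_0))$, contradicting $\psi(\pi/2)=t$. Thus $\psi$ is strictly increasing with values in $(0,\pi)$, in particular bounded, so its maximal interval of existence is all of $(0,\pi)$ and it is there the unique solution; moreover the monotone limits $\psi(0^+)\in[0,t)$ and $\psi(\pi^-)\in(t,\pi]$ exist. To see $\psi(\pi^-)=\pi$: if $\psi(\pi^-)=L<\pi$ then $\sin\psi$ is bounded below by a positive constant near $\theta=\pi$ while the bracket in $F$ is bounded above, so $\psi'\ge c\,\csc\theta$ there with $c>0$; since $\int^{\pi}\csc\theta\,d\theta=\infty$, this forces $\psi\to+\infty$, a contradiction. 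Similarly $\psi(0^+)=0$.

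The technical heart is the regularity of $\psi$ up to $\theta=0$ (the endpoint $\theta=\pi$ being identical after the substitution $x=\tan((\pi-\theta)/2)$ together with a shift of $\psi$ by $\pi$). Put $x=\tan(\theta/2)$, a diffeomorphism of a neighbourhood of $0^+$ in $\theta$ onto a neighbourhood of $0^+$ in $x$, with $\theta(x)=2\arctan x$ smooth at $x=0$ and $\sin\theta=2x/(1+x^2)$; then \eqref{Eq:1Ix} becomes $x\,\Sigma'=\widetilde G(\Sigma,x)$ for $\Sigma(x):=\psi(\theta(x))$, where
\[
\widetilde G(\Sigma,x)=\frac{k_1^{1/2}\sin\Sigma}{\bigl(k_1\cos^2(\Sigma-\theta(x))+k_3\sin^2(\Sigma-\theta(x))\bigr)^{1/2}}
\]
is smooth near $(0,0)$, vanishes identically in $x$ when $\Sigma=0$, and satisfies $\partial_\Sigma\widetilde G(0,0)=1$. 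A Gr\"onwall-type estimate in the logarithmic variable $s=\log\tan(\theta/2)$ — where $\psi$, viewed as a function of $s$, obeys $\psi'=\widetilde G(\psi,e^{s})$ and $\psi'/\psi\to1$ as $s\to-\infty$ — shows that $\psi/\tan(\theta/2)$ has a finite \emph{positive} limit $v_0$ as $\theta\to0^+$. Now set $h:=\psi/x=\Sigma/x$; since $\widetilde G(\Sigma,x)=\Sigma+O\bigl((\Sigma,x)^2\bigr)$, a Hadamard-type factorization shows that $h$ solves $h'=\Phi(h,x)$ with $\Phi(h,x):=x^{-2}\bigl(\widetilde G(xh,x)-xh\bigr)$ extending to a smooth function across $x=0$ — this is precisely where the value $1$ of $\partial_\Sigma\widetilde G(0,0)$ is used. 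Being bounded with $h(0^+)=v_0$, $h$ is the solution of this regular initial value problem, hence smooth up to $x=0$; therefore $\psi=xh$ and $\tfrac{\sin\psi}{\sin\theta}=\tfrac{1+x^2}{2}\,h\,\tfrac{\sin(xh)}{xh}$ are smooth up to $\theta=0$, with $\psi(0)=0$ and with $\psi'(0)$ and $\tfrac{\sin\psi}{\sin\theta}\big|_{\theta=0}$ both equal to $v_0/2>0$. Combining with smoothness on $(0,\pi)$ and the analogous statement at $\theta=\pi$, and undoing the initial shift, gives the full conclusion: $\psi\in C^\infty((0,\pi))$ and extends smoothly to $[0,\pi]$ (in particular $\psi\in C^{2,1}([0,\pi])$), $\psi$ is strictly increasing (its derivative is positive on the open strip and at both endpoints), $\psi(0)=0$, $\psi(\pi)=\pi$, and $\tfrac{\sin\psi}{\sin\theta}\in C^\infty([0,\pi])\subset C^{1,1}([0,\pi])$.

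The main obstacle is the boundary regularity of the previous paragraph: one must extract the \emph{sharp} rate $\psi\asymp\theta$ at each endpoint (not merely $\psi\to0$), which works because the elastic constants conspire to make the linearized exponent equal to $1$, and then remove the apparent singularity from the equation for $\psi/\theta$; everything else — trapping, identification of the limits, uniqueness — is routine. Along the way one should also record the two elementary facts $\int_0^{\pi/2}\csc\theta\,d\theta=\int_{\pi/2}^{\pi}\csc\theta\,d\theta=\infty$, and, for case (iii), simply rerun the whole argument with all signs reversed.
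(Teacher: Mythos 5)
Your proposal is correct, and while the skeleton (constant solutions for $t\in\pi\ZZ$, reduction by periodicity, the trapping/monotonicity argument, and the identification of the endpoint limits via the divergence of $\int\csc\theta\,d\theta$) matches the paper, your treatment of the boundary regularity is genuinely different. The paper proceeds by explicit bootstrap: it first integrates the two-sided bound $p_1\sin\psi/\sin\theta\le\psi'\le p_2\sin\psi/\sin\theta$ to get continuity at the endpoints, then upgrades $\psi\le c\theta^{1-c\vareps^2}$ to $\psi\le c\theta^{1/2}$ and finally to $\psi\le c\theta$, and then differentiates the ODE twice, bounding $\chi'$ and $\chi''$ by hand to conclude $\chi\in C^{1,1}$ and $\psi\in C^{2,1}$. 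You instead desingularize: after $x=\tan(\theta/2)$ the equation has a regular singular point at $x=0$ with linearized exponent $1$, and the substitution $h=\psi/x$ turns it into a regular initial value problem $h'=\Phi(h,x)$ with $\Phi$ smooth across $x=0$, so that $h$ (hence $\psi$ and $\sin\psi/\sin\theta$) is $C^\infty$ up to the boundary --- strictly stronger than the paper's $C^{2,1}$ and $C^{1,1}$ conclusions, and arguably cleaner since no second derivative ever has to be estimated. What the paper's route buys in exchange is that it is entirely elementary and produces the explicit formulas for $\chi'$ and $\chi''$ used implicitly elsewhere. Two steps of yours deserve to be written out rather than asserted: (a) the existence of the positive limit $v_0=\lim\psi/\tan(\theta/2)$ requires a preliminary decay rate (e.g.\ $\psi\le Ce^{s/2}$ from $d(\log\psi)/ds\ge 1/2$) before $\int(a(\psi,e^s)-1)\,ds$ can be seen to converge --- this is the same two-step bootstrap the paper performs; and (b) the smoothness of $\Phi$ across $x=0$ uses not only $\partial_\Sigma\widetilde G(0,0)=1$ but also that $\partial_\Sigma\widetilde G(0,x)-1$ is divisible by $x$ as a smooth function (here it is even in $x$, hence $O(x^2)$), which is what makes $\bigl(\widetilde G(xh,x)-xh\bigr)/x^2$ smooth after the Hadamard factorization. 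Both points check out for this equation, so there is no gap, only compression.
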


\begin{proof}
Statement (i) is clear. We will only prove statement (ii). The proof of statement (iii) is similar and omitted.

Replacing $\psi$ by $\psi - 2\ell\pi$ if necessary, we may assume without loss of generality that $\ell = 0$. The uniqueness and the smoothness of $\chi$ in its maximal domain of existence follows from standard ODE theory. Let $(c,d) \subset (0,\pi)$ be a maximal interval containing $\pi/2$ on which $\psi$ exists and satisfies $0 < \psi < \pi$. Note that in this interval, $\psi' > 0$ by \eqref{Eq:1Ix}. If $c > 0$, then by standard existence theory, $\psi$ would exist in $(c-\delta,c+\delta)$ for some small $\delta > 0$, and so the maximality of $c$ would imply that $\psi(c) = 0$. This is impossible as uniqueness theory then would give $\psi \equiv 0$, contradicting $\psi(\pi/2) = t \neq 0$. Thus $c = 0$. Arguing similarly, $d = \pi$. We have thus shown that $\psi$ exists and is increasing in $(0,\pi)$ and satisfies $0 < \psi < \pi$ in $(0,\pi)$. 

The rest of the proof concerns the behavior of $\psi$ at the endpoints and is split into three steps.

\medskip
\noindent
\underline{Step 1:} We prove that $\psi \in C^0([0,\pi])$, $\psi(0) = 0$ and $\psi(\pi) = \pi$.

By \eqref{Eq:1Ix}, we have that
\[
\frac{p_1 \sin \psi}{\sin\theta} \leq \psi' \leq \frac{p_2 \sin \psi}{\sin\theta} \quad \text{ in } (0,\pi), 
\]
where
\[
 p_1 := \frac{k_1^{1/2}}{\max\{k_1^{1/2},k_3^{1/2}\}}, p_2 := \frac{k_1^{1/2}}{\min\{k_1^{1/2},k_3^{1/2}\}}.
\]
This implies
\[
p_1\ln \tan\frac{\theta}{2} \leq \ln \tan \frac{\psi}{2} - \ln \tan \frac{t}{2} \leq p_2\ln \tan\frac{\theta}{2} \quad \text{ in } (0,\pi).
\]
Since $\psi$ is increasing and $0 < \psi < \pi$ in $(0,\pi)$, we deduce that $\psi$ is continuous at $\theta = 0$ and $\theta = \pi$ with $\psi(0) = 0$ and $\psi(\pi) = \pi$. 

\medskip
\noindent
\underline{Step 2:} We prove that $\psi \in C^{0,1}([0,\pi])$.

 Fix some small $\vareps \in (0,1/2)$ and $\theta_0 \in (0,\vareps)$ such that $0 < \psi(\theta) < \vareps$ in $(0,\theta_0)$. Using \eqref{Eq:1Ix}, we can find some constant $c_1 > 0$ depending only on $k_1, k_3$ such that
\[
\psi' \geq \frac{(1 - c_1 \vareps^2)\psi}{\theta} \text{ in } (0,\theta_0),
\] 
It follows that
\[
\ln\frac{\psi(\theta_0)}{\psi(\theta)} \geq (1 - c \vareps^2) \ln \frac{\theta_0}{\theta}
\]
and so
\[
\psi(\theta) \leq \psi(\theta_0)\Big(\frac{\theta}{\theta_0}\Big)^{1 - c \vareps^2} \text{ in } (0,\theta_0).
\]
Hence, by sequeezing $\vareps$ if necessary we find some $c_2$ depending on $\psi$ itself such that 
\[
\psi(\theta) \leq c_2 \theta^{1/2}  \text{ in } (0,\theta_0).
\]
Returning once again to \eqref{Eq:1Ix}, we then find a constant $c_3$ depending only on $k_1, k_3$ and $c_2$ such that
\[
\psi' \geq \frac{\sin\psi}{\theta(1 + c_3 \theta)} \text{ in } (0,\theta_0).
\] 
This implies
\[
 \frac{\tan \frac{\psi(\theta_0)}{2}}{\tan \frac{\psi(\theta)}{2}} \geq  \frac{\theta_0(1 + c_3\theta)}{\theta(1 + c_3\theta_0)}\text{ in } (0,\theta_0).
\]
Consequently, we can find a constant $c_4 > 0$ such that
\begin{equation}
\psi(\theta) \leq c_4 \theta \text{ in } (0,\theta_0).
	\label{Eq:psC0-1}
\end{equation}
Arguing similarly, we have that
\begin{equation}
\psi(\theta) \geq \pi - c_5(\pi - \theta) \text{ in } (\pi - \theta_0,\pi).
	\label{Eq:psC0-2}
\end{equation}
Recalling that $0 < \psi < \pi$, we deduce from the last two estimates that
\[
\chi := \frac{\sin \psi}{\sin \theta} \in L^\infty((0,1)).
\] 
Returning to \eqref{Eq:1Ix}, we deduced that $\psi' \in L^\infty((0,\pi))$ and so $\psi \in C^{0,1}([0,\pi])$.

\medskip
\noindent
\underline{Step 3:} We prove that $\psi \in C^{2,1}([0,\pi])$ and $\chi \in C^{1,1}([0,\pi])$.

We compute using \eqref{Eq:1Ix}:
\begin{align*}
\chi' 
	&= \frac{\cos \psi \psi'}{\sin\theta} - \frac{\sin \psi \cos\theta}{\sin^2\theta}\\
	&=  \frac{k_1^{1/2}\sin\psi \cos \psi}{\sin^2 \theta \big(k_1  \cos^2(\psi - \theta) + k_3 \sin^2(\psi - \theta)\big)^{1/2}} - \frac{\sin \psi \cos\theta}{\sin^2\theta}\\
	&= \frac{\chi}{\sin\theta}\Big[\frac{k_1^{1/2} \cos \psi}{  \big(k_1  \cos^2(\psi - \theta) + k_3 \sin^2(\psi - \theta)\big)^{1/2}} -   \cos\theta \Big],
\end{align*}
and
\begin{align*}
\chi''
	&= \frac{\chi'}{\sin\theta}\Big[\frac{k_1^{1/2} \cos \psi}{  \big(k_1  \cos^2(\psi - \theta) + k_3 \sin^2(\psi - \theta)\big)^{1/2}} -   \cos\theta \Big],\\
		&\qquad- \frac{\chi\,\cos\theta}{\sin^2\theta} \Big[\frac{k_1^{1/2} \cos \psi}{  \big(k_1  \cos^2(\psi - \theta) + k_3 \sin^2(\psi - \theta)\big)^{1/2}} -   \cos\theta \Big]\\
		&\qquad +  \chi \Big[\frac{- k_1^{1/2} \chi \psi'}{  \big(k_1  \cos^2(\psi - \theta) + k_3 \sin^2(\psi - \theta)\big)^{1/2}} +  1\Big]\\
		&\qquad +  \frac{\chi}{\sin\theta}  \frac{  (k_1 - k_3) k_1^{1/2} \cos\psi}{  \big(k_1  \cos^2(\psi - \theta) + k_3 \sin^2(\psi - \theta)\big)^{3/2}}  \sin(\psi - \theta) \cos(\psi - \theta).
\end{align*}
Using \eqref{Eq:psC0-1} and \eqref{Eq:psC0-2}, we deduce from the above that 
\[
|\chi'| \leq c_6\sin\theta \text{ in } (0,\pi),
\]
and then
\[
|\chi''| \leq c_7 \text{ in } (0,\pi)
\]
for some constants $c_6, c_7 > 0$. This implies that $\chi \in C^{1,1}([0,\pi])$. By differentiating \eqref{Eq:1Ix}, we have
\begin{align*}
\psi'' 
	&=  \frac{k_1^{1/2}\chi'}{\big(k_1  \cos^2(\psi - \theta) + k_3 \sin^2(\psi - \theta)\big)^{1/2}}\\
		&\qquad +  \frac{(k_1 - k_3) k_1^{1/2}\chi}{ \big(k_1  \cos^2(\psi - \theta) + k_3 \sin^2(\psi - \theta)\big)^{3/2}}\sin(\psi - \theta) \cos(\psi - \theta) \in C^{0,1}([0,\pi])
\end{align*}
and so $\psi \in C^{2,1}([0,\pi])$. The proof is complete.
\end{proof}

We have some immediate consequences of Proposition \ref{Prop:ODE}.

\begin{corollary}\label{Cor:ODE1}
For any $t \in (0,\pi)$, there exists a unique solution $\psi_t \in C^\infty((0,\pi)) \cap C^{2,1}([0,\pi])$ to \eqref{Eq:psitdef}. Moreover, $\psi_t$ is increasing, $\psi_t(0) = 0$, $\psi_t(\pi) = \pi$ and $\frac{\sin\psi_t}{\sin\theta} \in C^{1,1}([0,\pi])$.
\end{corollary}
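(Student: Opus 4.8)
The plan is to observe that Corollary \ref{Cor:ODE1} is nothing more than the special case $\ell = 0$ of Proposition \ref{Prop:ODE}(ii), so the proof should consist of checking that the hypotheses match. First I would note that the differential equation in \eqref{Eq:psitdef} is literally \eqref{Eq:1Ix}, and the condition $\psi_t(\pi/2) = t$ is \eqref{Eq:1II}. Since we are given $t \in (0,\pi)$, and $(0,\pi) = (2\ell\pi,(2\ell+1)\pi)$ with $\ell = 0$, Proposition \ref{Prop:ODE}(ii) applies verbatim.

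Applying that proposition then yields at once: the existence and uniqueness of a solution $\psi_t \in C^\infty((0,\pi)) \cap C^{2,1}([0,\pi])$, the fact that $\psi_t$ is strictly increasing, and the boundary values $\psi_t(0) = 2\ell\pi = 0$ and $\psi_t(\pi) = (2\ell+1)\pi = \pi$. The remaining claim $\frac{\sin\psi_t}{\sin\theta} \in C^{1,1}([0,\pi])$ is precisely the ``in all cases'' conclusion at the end of Proposition \ref{Prop:ODE}. There is essentially no obstacle here: the only thing to be careful about is the bookkeeping that $t \in (0,\pi)$ places us in case (ii) rather than (i) or (iii), and that the normalization $\ell = 0$ makes the stated endpoint values $0$ and $\pi$ coincide with the general formula. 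Hence the corollary follows immediately.
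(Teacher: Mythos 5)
Your proposal is correct and matches the paper exactly: the paper presents Corollary \ref{Cor:ODE1} as an immediate consequence of Proposition \ref{Prop:ODE}, and your identification of \eqref{Eq:psitdef} with \eqref{Eq:1Ix}--\eqref{Eq:1II} in the case $\ell=0$ of part (ii) is precisely the intended reading. Nothing further is needed.
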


\begin{corollary}\label{Cor:ODE2}
Let $u$ be a zero-homogeneous and $O(2)$-equivariant critical point of $E$ with a lifting $\psi \in X_j(D)$ for some $j \in \ZZ$. Then $j \in \{0, \pm 1\}$. If $j = 0$, then $u$ is the constant map $u \equiv e_3$. If $j = 1$, then $\psi(\pi/2)  \in (0,\pi)$ and $\psi = \psi_{\psi(\pi/2)}$. If $j = -1$, then $\psi(\pi/2)  \in (-\pi,0)$ and $\psi = - \psi_{-\psi(\pi/2)}$.
\end{corollary}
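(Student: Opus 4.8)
The plan is to combine the first integral \eqref{Eq:1stInt} --- already derived above for the ($r$-independent) lifting $\psi=\psi(\theta)$ of any such $u$ --- with the ODE classification of Proposition \ref{Prop:ODE}, after establishing a rigidity dichotomy for $\psi$. Since $\psi$ is a weak solution of the second-order equation \eqref{Eq:ELy} lying in $H^1_{\rm loc}((0,\pi))$, standard ODE regularity gives $\psi\in C^\infty((0,\pi))$.

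First I would prove that $\psi$ is either constant (equal to some $m\pi$, $m\in\ZZ$) or satisfies $\psi(\theta)\notin\pi\ZZ$ for every $\theta\in(0,\pi)$. Indeed, if $\psi(\theta_0)\in\pi\ZZ$ for some $\theta_0\in(0,\pi)$, then, since $\sin\theta_0>0$ and $k_1\cos^2(\psi-\theta)+k_3\sin^2(\psi-\theta)\ge\min\{k_1,k_3\}>0$, \eqref{Eq:1stInt} gives $\psi'(\theta_0)=0$ and, near $\theta_0$, the bound $|\psi'|\le C|\sin\psi|\le C|\psi-\psi(\theta_0)|$. Gronwall's inequality then forces $\psi\equiv\psi(\theta_0)$ on a neighbourhood of $\theta_0$; as the set where $\psi-\psi(\theta_0)$ vanishes is consequently both open and (by continuity) closed in $(0,\pi)$, it is all of $(0,\pi)$.

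Next I would read off the admissible values of $j$. For $r$-independent $\psi$ the condition $\psi\in X_j(D)$ is $\int_0^\pi[(\psi')^2+\csc^2\theta\,(\psi-j\theta)^2]\sin\theta\,d\theta<\infty$, which in particular makes $\csc\theta\,\sin^2\psi$ integrable on $(0,\pi)$. If $\psi\equiv m\pi$, finiteness of $\int_0^\pi(m\pi-j\theta)^2\csc\theta\,d\theta$ forces $m=0$ (from $\theta\to0$) and $m=j$ (from $\theta\to\pi$), hence $j=0$, $\psi\equiv0$ and $u\equiv e_3$. If $\psi$ is nonconstant, then by the dichotomy and connectedness $\psi$ takes values in a single open strip $(n\pi,(n+1)\pi)$; there $(\psi')^2>0$ by \eqref{Eq:1stInt}, so $\psi$ is strictly monotone and has one-sided limits at $0$ and $\pi$ in $[n\pi,(n+1)\pi]$. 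These limits cannot lie in the open strip --- otherwise $\csc\theta\,\sin^2\psi$ would not be integrable near the corresponding endpoint --- so they are $n\pi$ and $(n+1)\pi$; monotonicity then yields $\psi(0)=n\pi,\ \psi(\pi)=(n+1)\pi$ when $\psi$ increases and $\psi(0)=(n+1)\pi,\ \psi(\pi)=n\pi$ when $\psi$ decreases. Imposing integrability of $\csc\theta\,(\psi-j\theta)^2$ at $\theta=0$ and $\theta=\pi$ now forces $n=0,\ j=1$ in the increasing case and $n=-1,\ j=-1$ in the decreasing case; in particular $j\in\{0,\pm1\}$.

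Finally I would identify $\psi$. When $j=1$ we have $0<\psi<\pi$, so $\sin\psi>0$ and the positive square root in \eqref{Eq:1stInt} shows $\psi$ solves \eqref{Eq:1Ix}, i.e., \eqref{Eq:psitdef}, with $\psi(\pi/2)=:t\in(0,\pi)$; by the uniqueness in Corollary \ref{Cor:ODE1}, $\psi=\psi_t$ and $u=u_t$. When $j=-1$ we have $-\pi<\psi<0$, so $\sin\psi<0$ and the sign of $\psi'$ again forces \eqref{Eq:1Ix}, now with $\psi(\pi/2)=:t\in(-\pi,0)$; Proposition \ref{Prop:ODE}(iii) (applied with $\ell=-1$) identifies $\psi$ as the unique decreasing solution running from $0$ to $-\pi$, which is the map written $-\psi_{-t}$ in the statement. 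I expect the rigidity dichotomy to be the only genuinely substantial point: once it is available, everything else is integrability bookkeeping built on Proposition \ref{Prop:ODE}, the one place requiring care being the existence and location (as a multiple of $\pi$) of the one-sided limits of $\psi$, which is exactly where the weighted integrability of $X_j(D)$ is used to exclude all $j\notin\{0,\pm1\}$.
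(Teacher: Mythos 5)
Your write-up is, for the most part, a correct and complete implementation of what the paper leaves implicit in calling this an ``immediate consequence'' of Proposition \ref{Prop:ODE}: the first integral \eqref{Eq:1stInt}, the rigidity dichotomy (constant versus never touching $\pi\ZZ$, which your Gronwall argument delivers cleanly), the exclusion of endpoint limits interior to a strip via the integrability of $\csc\theta\,\sin^2\psi$, the determination of $\psi(0)$ and $\psi(\pi)$ from the weight $\csc\theta\,(\psi-j\theta)^2$, the constancy of the sign of $\psi'$, and the resulting restriction $j\in\{0,\pm1\}$ together with the identification $\psi=\psi_{\psi(\pi/2)}$ when $j=1$. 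Everything up to and including the identification of the $j=-1$ solution as the unique decreasing solution of \eqref{Eq:1Ix} with $\psi(\pi/2)=t\in(-\pi,0)$ furnished by Proposition \ref{Prop:ODE}(iii) with $\ell=-1$ checks out.

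The gap is your final clause, ``which is the map written $-\psi_{-t}$ in the statement'': this is asserted rather than proved, and it fails unless $k_1=k_3$. The substitution $\psi\mapsto-\psi$ does not preserve \eqref{Eq:1Ix}: it turns $\cos^2(\psi-\theta)$ into $\cos^2(\psi+\theta)$, and
\[
\cos^2(\psi+\theta)-\cos^2(\psi-\theta)=-\sin(2\psi)\sin(2\theta),
\]
which is not identically zero along a non-constant solution. Consequently $-\psi_{-t}$ satisfies
\[
\phi'=\frac{k_1^{1/2}\sin\phi}{\sin\theta\big(k_1\cos^2(\phi+\theta)+k_3\sin^2(\phi+\theta)\big)^{1/2}},
\]
not \eqref{Eq:1Ix}, so by the uniqueness in Proposition \ref{Prop:ODE}(iii) it cannot coincide with the $j=-1$ critical point when $k_1\neq k_3$. (Equivalently, the map with lifting $-\psi_{-t}$ is $R_\pi u_{-t}$, a target-only rotation of $u_{-t}$, and $E$ is not invariant under target-only rotations when $k_1\neq k_3$; one checks that $-\psi_{-t}$ does not satisfy \eqref{Eq:ELy}.) The correct endpoint of your otherwise complete argument is that for $j=-1$ the lifting is the solution of \eqref{Eq:1Ix} through $(\pi/2,t)$ with $t\in(-\pi,0)$ given by Proposition \ref{Prop:ODE}(iii) with $\ell=-1$ --- a genuinely different function from $-\psi_{-t}$. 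This appears to be a defect of the statement itself (and of the corresponding remark in the introduction) rather than of your strategy, but your proof should flag the discrepancy instead of declaring the two functions equal.
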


We conclude this subsection with the following result:
\begin{lemma}\label{Lem:ELValidation}
Let $t \in (0,\pi)$ and $\psi_t$ be the unique solution to \eqref{Eq:psitdef}. Then the map $u_t$ defined by 
\[
u_t(r,\theta,\varphi) = (\sin \psi_t(\theta) \cos \varphi, \sin \psi_t(\theta) \sin \varphi, \cos \psi_t(\theta))^T
\]
belongs to $H^1_{\rm loc}(\RR^3) \cap C^1(\RR^3 \setminus \{0\})$, has a singularity around the origin of topological degree one and is a zero-homogeneous and $O(2)$-equivariant critical points of the Oseen-Frank energy $E$.
\end{lemma}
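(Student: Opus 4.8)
The plan is to settle the structural properties of $u_t$ first and then reduce both criticality and regularity to the ODE analysis of Proposition~\ref{Prop:ODE} (equivalently Corollary~\ref{Cor:ODE1}) together with Corollary~\ref{Cor:EL}. Zero-homogeneity is immediate since $u_t$ depends only on $(\theta,\varphi)$, and $O(2)$-equivariance holds because $u_t$ has the form \eqref{Eq:Ansatz1} with lifting $\psi=\psi_t(\theta)$. For the degree, I would observe that in spherical coordinates the restriction $u_t|_{\partial B_r}$ pulls back the area form of $\Sphere^2$ to $\sin\psi_t(\theta)\,\psi_t'(\theta)\,d\theta\wedge d\varphi$, so its degree is $\frac{1}{4\pi}\int_0^{2\pi}\!\int_0^\pi \sin\psi_t\,\psi_t'\,d\theta\,d\varphi = \frac12\big[-\cos\psi_t\big]_{0}^{\pi}=1$ by $\psi_t(0)=0$, $\psi_t(\pi)=\pi$; since $u_t$ is zero-homogeneous and non-constant (e.g.\ $u_t\to\pm e_3$ along the positive and negative $x_3$-axis) it is discontinuous at the origin, so it carries a point defect of degree one there.

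Next I would verify $u_t\in H^1_{\rm loc}(\RR^3)$. First check $\psi_t\in X_1(D)$: one has $\partial_r\psi_t=0$, $|\psi_t'|$ bounded, $\csc\theta\,|\sin\psi_t|=|\chi|$ bounded (with $\chi:=\tfrac{\sin\psi_t}{\sin\theta}\in C^{1,1}([0,\pi])$), and $\csc\theta\,|\psi_t-\theta|$ bounded thanks to the bounds $\psi_t(\theta)\le c\theta$ near $\theta=0$ and $\psi_t(\theta)\ge\pi-c(\pi-\theta)$ near $\theta=\pi$ from the proof of Proposition~\ref{Prop:ODE}. Then the computation of $|\nabla u_t|^2$ in spherical coordinates as in Lemma~\ref{Lem:O2Lift} shows $\int_\Omega|\nabla u_t|^2<\infty$, so $u_t\in H^1(\Omega,\Sphere^2)$, and zero-homogeneity upgrades this to $u_t\in H^1_{\rm loc}(\RR^3)$.

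For criticality I would invoke Corollary~\ref{Cor:EL} (legitimate since $u_t\in H^1(\Omega,\Sphere^2)$ is $O(2)$-equivariant with lifting $\psi_t\in X_1(D)$): it suffices to check that the $r$-independent function $\psi_t$ solves \eqref{Eq:EL-1} in $D$. For $r$-independent $\psi$, equation \eqref{Eq:EL-1} is precisely the equation displayed at the beginning of Section~2.4, and that equation rearranges --- reversibly, the only operations being multiplications by powers of $g:=k_1\cos^2(\psi-\theta)+k_3\sin^2(\psi-\theta)\ge\min\{k_1,k_3\}>0$ --- to \eqref{Eq:ELy}. So I only need $\psi_t$ to satisfy \eqref{Eq:ELy}. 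Setting $h:=\sin\theta\,g^{1/2}\psi_t'$, the ODE \eqref{Eq:psitdef} says exactly $h=k_1^{1/2}\sin\psi_t$; differentiating (valid since $\psi_t\in C^{2,1}([0,\pi])$) and substituting \eqref{Eq:psitdef} again for $\psi_t'$ gives $h'=k_1^{1/2}\cos\psi_t\,\psi_t'=k_1\csc\theta\,g^{-1/2}\sin\psi_t\cos\psi_t$, which is \eqref{Eq:ELy}. Hence $u_t$ is critical for $E$.

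The genuinely delicate point, which I expect to be the main obstacle, is the $C^1$ regularity of $u_t$ on $\RR^3\setminus\{0\}$ across the symmetry axis; away from the axis $u_t$ is $C^\infty$ because $\psi_t\in C^\infty((0,\pi))$. To cross the axis I would use the Cartesian form
\[
u_t(x)=\chi(\theta)\,\frac{(x_1,x_2,0)^T}{|x|}+\cos\psi_t(\theta)\,e_3,\qquad \cos\theta=\frac{x_3}{|x|},
\]
and differentiate, noting that every singular factor $1/\sin\theta$ produced by the chain rule for $\theta(x)=\arccos(x_3/|x|)$ comes multiplied by a factor $x_1$ or $x_2$ vanishing on the axis; combined with $\chi\in C^{1,1}([0,\pi])$ and the decay $|\chi'|\le c\sin\theta$ established in Step~3 of the proof of Proposition~\ref{Prop:ODE}, this forces each first partial derivative of $u_t$ to extend continuously (with value $0$) across the axis. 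All other steps are routine given Proposition~\ref{Prop:ODE} and Corollary~\ref{Cor:EL}.
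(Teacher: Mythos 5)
Your proposal is correct and follows essentially the same route as the paper: the paper likewise dispatches homogeneity, equivariance, degree, $H^1_{\rm loc}$ membership and $C^1$ regularity (the latter via $\frac{\sin\psi_t}{\sin\theta}\in C^1$) in one line each, and establishes criticality by reducing through Corollary \ref{Cor:EL} to \eqref{Eq:ELy} and performing exactly your computation $\frac{d}{d\theta}\big[\sin\theta\,g^{1/2}\psi_t'\big]=k_1^{1/2}\cos\psi_t\,\psi_t'$. The only quibble is the parenthetical ``(with value $0$)'' in your axis-regularity step: only the singular chain-rule contributions (those carrying a factor $1/\sin\theta$) vanish on the axis, not the full partial derivatives (e.g.\ for the hedgehog $\nabla n=\frac{1}{r}(I-n\otimes n)\neq 0$ there); the continuity conclusion itself is fine.
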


\begin{proof}
It is easy to see that $u_t \in H^1_{\rm loc}(\RR^3)$. The differentiability of $\psi_t$ in $\RR^3 \setminus \{0\}$ follows from the fact that $\frac{\sin\psi_t}{\sin\theta} \in C^1([0,1])$. It is clear that $u_t$ is zero-homogeneous and $O(2)$-equivariant and has a singularity around the origin of topological degree one. It remains to show that $u_t$ satisfies \eqref{Eq:ELFull}. By Corollary \ref{Cor:EL}, we only need to show that $\psi_t$ satisfies \eqref{Eq:EL-1}, or equivalently \eqref{Eq:ELy}. Indeed, we compute using \eqref{Eq:1Ix}:
\begin{align*}
&\frac{d}{d\theta} \Big[\sin\theta \big(k_1 \cos^2(\psi - \theta) + k_3 \sin^2(\psi - \theta)\big)^{1/2} \psi'\Big]\\
	&\qquad =  \frac{d}{d\theta} [k_1^{1/2} \sin \psi]  
		= k_1^{1/2} \cos \psi \psi'\\
	&= k_1 \csc  \theta \big(k_1 \cos^2(\psi - \theta) + k_3 \sin^2(\psi - \theta)\big)^{-1/2} \sin \psi \cos \psi \text{ in } (0,\pi).
\end{align*}
This clearly gives \eqref{Eq:ELy}.
\end{proof}

\subsection{Minimality of $\psi_t$ in $X_1(D)$}

In this subsection, we prove:

\begin{proposition}\label{Prop:Min}
Let $t \in (0,\pi)$ and $\psi_t$ be the solution to \eqref{Eq:psitdef}. Then
\[
J[\psi_t] \leq J[\psi] \text{ for all } \psi \in X_1(D) \text{ satisfying } \psi|_{r = 1} = \psi_t,
\]
where equality is attained if and only if $\psi = \psi_t$. Here $J$ is the energy derived in Lemma \ref{Lem:RE}.
\end{proposition}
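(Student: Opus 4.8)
The plan is to exhibit an explicit calibration for the bulk part of the functional $J$ of Lemma~\ref{Lem:RE}. First I would record that, for $\psi\in X_1(D)$ with $\psi|_{r=1}=\psi_t$, Lemma~\ref{Lem:RE} gives $J[\psi]=\pi\int_0^1\!\int_0^\pi \mathcal L[\psi]\,d\theta\,dr+\mathcal B$, where $\mathcal L[\psi]$ denotes the bulk integrand (the bracket multiplied by $r^2\sin\theta$) and $\mathcal B=k_1\pi\int_0^\pi\big[(\psi_t-\sin\psi_t\cos\psi_t)\cos\theta-\sin^2\psi_t\sin\theta\big]\,d\theta$ depends only on the prescribed trace; so it suffices to prove $\int_0^1\!\int_0^\pi\mathcal L[\psi]\,d\theta\,dr\ge\int_0^1\!\int_0^\pi\mathcal L[\psi_t]\,d\theta\,dr$, with equality only for $\psi=\psi_t$. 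Next, with $\alpha=\psi-\theta$ and the rotated gradient variables
\[
w_1=r\cos\alpha\,\partial_r\psi+\sin\alpha\,\partial_\theta\psi,\qquad
w_2=-r\sin\alpha\,\partial_r\psi+\cos\alpha\,\partial_\theta\psi,
\]
a direct expansion of the bracket in Lemma~\ref{Lem:RE} rewrites the energy density as $\mathcal L[\psi]=\sin\theta\big(k_3 w_1^2+k_1 w_2^2+k_1\csc^2\theta\,\sin^2\psi\big)$.

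The core of the proof is the following calibration. Let $\zeta(\psi,\theta)$ be the right-hand side of \eqref{Eq:1Ix}, write $\mu(s,\theta)=\big(k_1\cos^2(s-\theta)+k_3\sin^2(s-\theta)\big)^{1/2}$ so that $\zeta=k_1^{1/2}\sin\psi/(\sin\theta\,\mu)$, and set $w_1^\ast=\zeta\sin\alpha$, $w_2^\ast=\zeta\cos\alpha$. Since $k_3(w_1^\ast)^2+k_1(w_2^\ast)^2=\zeta^2\mu^2=k_1\csc^2\theta\,\sin^2\psi$, I can recast
\[
\mathcal L[\psi]=\sin\theta\big(k_3 w_1^2+k_1 w_2^2+k_3(w_1^\ast)^2+k_1(w_2^\ast)^2\big)
\ \ge\ 2\sin\theta\big(k_3 w_1^\ast w_1+k_1 w_2^\ast w_2\big)=:\mathcal N[\psi],
\]
with $\mathcal L[\psi]-\mathcal N[\psi]=\sin\theta\big(k_3(w_1-w_1^\ast)^2+k_1(w_2-w_2^\ast)^2\big)\ge 0$, equality holding exactly where $w_1=w_1^\ast$ and $w_2=w_2^\ast$. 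Expanding $\mathcal N$ back in $\partial_r\psi,\partial_\theta\psi$ one gets $\mathcal N[\psi]=rA(\psi,\theta)\,\partial_r\psi+B(\psi,\theta)\,\partial_\theta\psi$ with
\[
A(s,\theta)=-\frac{2k_1^{1/2}(k_1-k_3)\sin s\,\sin(s-\theta)\cos(s-\theta)}{\mu(s,\theta)},\qquad
B(s,\theta)=2k_1^{1/2}\sin s\,\mu(s,\theta);
\]
these are smooth and bounded on $\RR\times[0,\pi]$ (since $\mu\ge\min\{k_1,k_3\}^{1/2}>0$) and satisfy $\partial_\theta B=-A$, because $\partial_\theta\mu=(k_1-k_3)\sin(s-\theta)\cos(s-\theta)/\mu$.

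Now I would invoke Lemma~\ref{Lem:ABCD} with $j=1$: it gives, for every competitor $\psi$,
\[
\int_0^1\!\int_0^\pi\mathcal N[\psi]\,d\theta\,dr=\int_0^\pi\!\int_0^{\psi_t(\theta)}A(s,\theta)\,ds\,d\theta+\int_0^\pi B(s,\pi)\,ds=:\mathcal K,
\]
a number depending only on $\psi_t$. On the other hand, $\psi_t$ is independent of $r$ and solves \eqref{Eq:1Ix}, so $\partial_r\psi_t=0$ and $\partial_\theta\psi_t=\zeta(\psi_t,\theta)$, whence $w_1[\psi_t]=w_1^\ast$, $w_2[\psi_t]=w_2^\ast$ and thus $\mathcal L[\psi_t]=\mathcal N[\psi_t]$ pointwise, giving $\int_0^1\!\int_0^\pi\mathcal L[\psi_t]\,d\theta\,dr=\mathcal K$. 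Combining, $\int_0^1\!\int_0^\pi\mathcal L[\psi]\,d\theta\,dr\ge\int_0^1\!\int_0^\pi\mathcal N[\psi]\,d\theta\,dr=\mathcal K=\int_0^1\!\int_0^\pi\mathcal L[\psi_t]\,d\theta\,dr$, i.e. $J[\psi]\ge J[\psi_t]$. If equality holds then the nonnegative density $\mathcal L[\psi]-\mathcal N[\psi]$ vanishes a.e., so $w_1=w_1^\ast$ and $w_2=w_2^\ast$ a.e.; inverting the orthogonal relation between $(w_1,w_2)$ and $(r\,\partial_r\psi,\partial_\theta\psi)$ forces $\partial_r\psi=0$ a.e., so $\psi$ is independent of $r$ and hence equals its trace $\psi_t$.

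The main obstacle is discovering the calibration—namely, splitting the zeroth-order term $k_1\csc^2\theta\sin^2\psi$ as $k_3(w_1^\ast)^2+k_1(w_2^\ast)^2$ with $w_1^\ast,w_2^\ast$ read off from the first-order ODE \eqref{Eq:1Ix}, so that completing the square in the rotated variables $(w_1,w_2)$ isolates the bilinear term $2\sin\theta(k_3 w_1^\ast w_1+k_1 w_2^\ast w_2)$, and then recognizing, after undoing the rotation, that this term has precisely the $rA\,\partial_r\psi+B\,\partial_\theta\psi$ form of Lemma~\ref{Lem:ABCD} with $\partial_\theta B=-A$, so that it only feels the boundary data. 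Everything else—the expansion identities, boundedness of $A,B$, convergence of the integrals (which follows from $X_1(D)\subset X(D)$ and the boundedness of $A,B$), and the equality analysis—is routine bookkeeping.
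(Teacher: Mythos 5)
Your proof is correct and is essentially the paper's own argument: both complete the square in the bulk density using the first--order ODE \eqref{Eq:1Ix} as calibration and then invoke Lemma \ref{Lem:ABCD} to show that the remaining bilinear term $rA(\psi,\theta)\partial_r\psi+B(\psi,\theta)\partial_\theta\psi$ depends only on the trace at $r=1$ (indeed your $A,B$ coincide with the paper's after the simplification $k_1k_3+(k_1-k_3)^2\sin^2\alpha\cos^2\alpha=(k_1\sin^2\alpha+k_3\cos^2\alpha)(k_1\cos^2\alpha+k_3\sin^2\alpha)$). The only difference is presentational: you diagonalize the quadratic form via the rotation $(r\partial_r\psi,\partial_\theta\psi)\mapsto(w_1,w_2)$ and use $x^2+y^2\ge 2xy$, whereas the paper completes the square directly in $(\partial_r\psi,\partial_\theta\psi)$; the resulting decomposition and equality analysis are the same.
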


\begin{proof}
We compute
\begin{align*}
J[\psi]
	&= \pi \int_0^1 \int_0^\pi \Big\{
		\big(k_1 \sin^2(\psi - \theta) + k_3 \cos^2(\psi - \theta) \big) \times\\
			&\qquad\qquad 
			 \times \Big[	  \partial_r\psi - \frac{k_1 - k_3}{r} \frac{\sin(\psi - \theta) \cos(\psi - \theta)}{ k_1 \sin^2(\psi - \theta) + k_3 \cos^2(\psi - \theta)} \times \\
			&\qquad \qquad \qquad 
			\times \Big(\partial_\theta \psi -   \frac{k_1^{1/2}}{\big(k_1 \cos^2(\psi - \theta) + k_3 \sin^2(\psi - \theta)\big)^{1/2} } \frac{\sin\psi}{\sin \theta}\Big)\Big]^2 \\
		&\qquad + \frac{k_1k_3}{r^2} \frac{1}{k_1 \sin^2(\psi - \theta) + k_3 \cos^2(\psi - \theta)}  \times\\
			&\qquad\qquad \times \Big(\partial_\theta \psi -   \frac{k_1^{1/2}}{\big(k_1 \cos^2(\psi - \theta) + k_3 \sin^2(\psi - \theta)\big)^{1/2} } \frac{\sin\psi}{\sin \theta}\Big)^2\\
		&\qquad - \frac{2(k_1 - k_3)k_1^{1/2}}{r} \frac{ \sin(\psi - \theta) \cos(\psi - \theta) }{\big(k_1 \cos^2(\psi - \theta) + k_3 \sin^2(\psi - \theta)\big)^{1/2} } \frac{\sin\psi}{\sin \theta} \partial_r \psi\\
		&\qquad + \frac{2k_1^{1/2}}{r^2} \frac{k_1 k_3 + (k_1 - k_3)^2 \sin^2(\psi - \theta) \cos^2(\psi - \theta)}{ k_1 \sin^2(\psi - \theta) + k_3 \cos^2(\psi - \theta) } \times\\
			&\qquad\qquad \times \frac{1}{\big(k_1 \cos^2(\psi - \theta) + k_3 \sin^2(\psi - \theta)\big)^{1/2} } \frac{\sin\psi}{\sin \theta}\partial_\theta \psi 
		\Big\}\,r^2\sin\theta\,d\theta\,dr\\
		&\quad + k_1 \pi \int_0^\pi \Big[ (\psi - \sin\psi \cos\psi)\cos\theta - \sin^2\psi \sin\theta\Big]_{r = 1} \,d\theta.
\end{align*}
Now, applying Lemma \ref{Lem:ABCD}  with
\begin{align*}
A(s,\theta)
	&= -   \frac{ (k_1 - k_3 )\sin(s - \theta) \cos(s - \theta) }{\big(k_1 \cos^2(s - \theta) + k_3 \sin^2(s - \theta)\big)^{1/2} }  \sin s,\\
B(s,\theta)
	&= \frac{k_1 k_3 + (k_1 - k_3)^2 \sin^2(s - \theta) \cos^2(s - \theta)}{ k_1 \sin^2(s - \theta) + k_3 \cos^2(s - \theta) } \times\\
			&\qquad\qquad \times \frac{1}{\big(k_1 \cos^2(s - \theta) + k_3 \sin^2(s- \theta)\big)^{1/2} }  \sin s,
\end{align*}
we get
\begin{align*}
&\int_0^1 \int_0^\pi \Big\{  -  \frac{r (k_1 - k_3)\sin(\psi - \theta) \cos(\psi - \theta) }{\big(k_1 \cos^2(\psi - \theta) + k_3 \sin^2(\psi - \theta)\big)^{1/2} } \sin\psi  \partial_r \psi\\
		&\qquad +   \frac{k_1 k_3 + (k_1 - k_3)^2 \sin^2(\psi - \theta) \cos^2(\psi - \theta)}{ k_1 \sin^2(\psi - \theta) + k_3 \cos^2(\psi - \theta) } \times\\
			&\qquad\qquad \times \frac{1}{\big(k_1 \cos^2(\psi - \theta) + k_3 \sin^2(\psi - \theta)\big)^{1/2} }  \sin\psi \partial_\theta \psi 
		\Big\} \,d\theta\,dr\\
	&\quad = - (k_1 - k_3 ) \int_0^\pi \int_0^{\psi(1,\theta)}      \frac{ \sin(s - \theta) \cos(s - \theta) }{\big(k_1 \cos^2(s - \theta) + k_3 \sin^2(s - \theta)\big)^{1/2} }  \sin s\,ds\,d\theta \\
		&\qquad
		+ \int_0^{ \pi} \frac{k_1 k_3 + (k_1 - k_3)^2 \sin^2s \cos^2s}{ k_1 \sin^2 s + k_3 \cos^2s}  \frac{1}{\big(k_1 \cos^2s + k_3 \sin^2s\big)^{1/2} }  \sin s\,ds
\end{align*}
Altogether we have
\begin{align*}
J[\psi]
	&= \pi \int_0^1 \int_0^\pi \Big\{
		\big(k_1 \sin^2(\psi - \theta) + k_3 \cos^2(\psi - \theta) \big) \times \\
			&\qquad\qquad 
			\times\Big[
		  \partial_r\psi - \frac{k_1 - k_3}{r} \frac{\sin(\psi - \theta) \cos(\psi - \theta)}{ k_1 \sin^2(\psi - \theta) + k_3 \cos^2(\psi - \theta)} \times \\
			&\qquad \qquad \qquad 
			\times \Big(\partial_\theta \psi -   \frac{k_1^{1/2}}{\big(k_1 \cos^2(\psi - \theta) + k_3 \sin^2(\psi - \theta)\big)^{1/2} } \frac{\sin\psi}{\sin \theta}\Big)\Big]^2 \\
		&\qquad + \frac{k_1k_3}{r^2} \frac{1}{k_1 \sin^2(\psi - \theta) + k_3 \cos^2(\psi - \theta)}  \times\\
			&\qquad\qquad \times \Big(\partial_\theta \psi -   \frac{k_1^{1/2}}{\big(k_1 \cos^2(\psi - \theta) + k_3 \sin^2(\psi - \theta)\big)^{1/2} } \frac{\sin\psi}{\sin \theta}\Big)^2 \Big\}\,r^2\sin\theta\,d\theta\,dr\\
		&\quad + k_1 \pi \int_0^\pi \Big[ (\psi - \sin\psi \cos\psi)\cos\theta - \sin^2\psi \sin\theta\Big]_{r = 1} \,d\theta\\
		&\quad   - 2k_1^{1/2} (k_1 - k_3 )\pi \int_0^\pi \int_0^{\psi(1,\theta)}      \frac{ \sin(s - \theta) \cos(s - \theta) }{\big(k_1 \cos^2(s - \theta) + k_3 \sin^2(s - \theta)\big)^{1/2} }  \sin s\,ds\,d\theta \\
		&\quad
		+ 2k_1^{1/2}\pi \int_0^{\pi} \frac{k_1 k_3 + (k_1 - k_3)^2 \sin^2s \cos^2s}{ k_1 \sin^2 s + k_3 \cos^2s}  \frac{1}{\big(k_1 \cos^2s + k_3 \sin^2s\big)^{1/2} }  \sin s\,ds.
\end{align*}
Using $\psi = \psi_t$ on $\{r = 1\}$ and \eqref{Eq:1Ix}, we see that $J[\psi_t]$ is equal to the sum on the last three lines and yielding
\[
J[\psi] \geq J[\psi_t].
\]
Moreover, if equality holds, we must have
\begin{align*}
&\partial_r\psi 
	- \frac{k_1 - k_3}{r} \frac{\sin(\psi - \theta) \cos(\psi - \theta)}{ k_1 \sin^2(\psi - \theta) + k_3 \cos^2(\psi - \theta)} \times \\
		&\qquad \qquad 
			\times \Big(\partial_\theta \psi -   \frac{k_1^{1/2}}{\big(k_1 \cos^2(\psi - \theta) + k_3 \sin^2(\psi - \theta)\big)^{1/2} } \frac{\sin\psi}{\sin \theta}\Big) = 0,\\
&\partial_\theta \psi -   \frac{k_1^{1/2}}{\big(k_1 \cos^2(\psi - \theta) + k_3 \sin^2(\psi - \theta)\big)^{1/2} } \frac{\sin\psi}{\sin \theta}
	= 0,
\end{align*}
which clearly implies $\partial_r \psi = 0$ and hence $\psi = \psi_t$ (since they agree on $\{r = 1\}$).
\end{proof}

\subsection{Proof of the main theorem}

\begin{proof}[Proof of Theorem \ref{Thm:Main}]
Statement (i) follows from Corollary \ref{Cor:ODE1}.

The first half of statement (ii) follows Lemma \ref{Lem:ELValidation}. The second half of statement (ii) follows from Corollary \ref{Cor:ODE2}.

Statement (iii) follows from the uniqueness for $\psi_t$ and the facts that
\begin{itemize}
\item $\psi(\theta) = \theta$ solves \eqref{Eq:psitdef} with $t = \pi/2$ for any $k_1, k_3 > 0$,
\item $\psi(\theta) = \arccos \frac{\cos t + \cos \theta}{1 + \cos t \cos\theta}$ solves \eqref{Eq:psitdef} when $k_1 = k_3$.
\end{itemize}

Statement (iv) follows from Proposition \ref{Prop:Min}.
\end{proof}

\def\cprime{$'$}

\end{document}